\newtheorem{thm}{Theorem}[section]
\newtheorem{cor}[thm]{Corollary}
\newtheorem{prop}[thm]{Proposition}
\theoremstyle{definition}
\newtheorem{defn}[thm]{Definition}
\theoremstyle{remark}
\newtheorem{rem}[thm]{Remark}
\numberwithin{equation}{section}
\newtheorem{exa}[thm]{Example}
\newcommand{\h}{\mathcal{H}}
\newcommand{\K}{\mathcal{K}}
\DeclareMathOperator{\Ran}{ran}
\DeclareMathOperator{\Span}{span}
\DeclareMathOperator{\Ker}{ker}
\begin{document}
\title[Representation properties of $G$-frames]{Properties of bounded representations for $G$-frames}%
\author {Fatemeh Ghobadzadeh, Yavar Khedmati and Javad Sedghi Moghaddam}%
\address{
\newline
\indent Department of Mathematics
\newline
\indent Faculty of Sciences
\newline \indent   University of Mohaghegh Ardabili
\newline \indent  Ardabil 56199-11367
\newline \indent Iran}
\email{gobadzadehf@yahoo.com}
\email{khedmati.y@uma.ac.ir, khedmatiy.y@gmail.com}
\email{j.smoghaddam@uma.ac.ir}
\subjclass[2000]{Primary 41A58, 42C15, 47A05} \keywords{representation, $g$-frame, dual, stability}
\begin{abstract}
Due to the importance of frame representation by a bounded operator in dynamical sampling, researchers studied the frames of the form $\{T^{i-1} f\}_{i\in \mathbb{N}}$, which $f$ belongs to separable Hilbert space $\h$ and $T\in B(\h)$, and investigated the properties of $T$. Given that $g$-frames include the wide range of frames such as fusion frames, the main purpose of this paper is to study the characteristics of the operator $T$ for $g$-frames of the form $\{\Lambda T^{i-1} \in B(\h,\K):i\in \mathbb{N}\}$.
\end{abstract}
\maketitle
\section{{\textbf Introduction}}
Duffin and Schaeffer introduced an extension of orthonormal bases for separable Hilbert space $\h$ named frames \cite{DS}, which in spite of producing $\h$, is not necessarily linearly independent. Frames are important tools in the signal/image processing \cite{Anto, Balador, Desti}, data compression \cite{Data2, Data1}, dynamical sampling \cite{ald1, ald2} and etc. 
\begin{defn}
A sequence $F=\{f_i\}_{i\in \mathbb{N}}$ in $\h$ is called a  frame for $\h$, if there exist two constants $A_F, B_F> 0$ such that
\begin{equation*}\label{abc}
A_F \|f\|^2\leq\sum_{i\in \mathbb{N}}|\langle f,f_i\rangle |^2 \leq B_F \|f\|^2,\quad f\in \h.
\end{equation*}
\end{defn}
For more on frames we refer to \cite{c4, HDL}.
 \par Aldroubi et al. introduced the concept of dynamical sampling to examine sequences of the form $\{T^{i-1}f\}_{i\in\mathbb{N}}\subset\h$, that spans $\h$ for $T\in B(\h)$. As frames span the space, researchers have studied the frames $F=\{f_i\}_{i\in\mathbb{N}}$ for infinte dimensional Hilbert space $\h$ that can be represented by $T$, i.e. $F=\{T^{i-1}f_1\}_{i\in\mathbb{N}}$ \cite{ald1,ald2}. Christensen et al. have shown that the only frames with bounded representations are those which are linearly independent and the kernel of their synthesis operators is invariant under right-shift operator $\mathcal{T}:\ell^2(\h,\mathbb{N})\rightarrow\ell^2(\h,\mathbb{N})$ defined by 
 \begin{align*}
 \mathcal{T}(\{c_i\}_{i\in\mathbb{N}})=(0,c_1,c_2,...), 
 \end{align*}
 where $\ell^2(\h,\mathbb{N})=\big\{\{g_i\}_{i\in \mathbb{N}}:g_i\in\h,\sum_{i\in \mathbb{N}}\|g_i\|^2<\infty\big\}$,
 such as orthonormal bases and Riesz bases \cite{main}. They have also explored the relationship between frame representation and its duals. For the applications of frames, they established that frame representations were preserved under some perturbations. Results \cite[Theorem 7]{ald2} and 
 \cite[Proposition 3.5]{chretal} are shown that the sequence $\{T^{i-1}f_1\}_{i\in\mathbb{N}}$ is not a frame, whenever $T$ is unitary or compact. Also, Lemma 2.1 and Proposition 2.3 of \cite{RashNej} indicate $\Ran{T}$ is close and give some equivalent conditions for $T$ to be surjective.
\par In 2006, Sun introduced a generalization of frames, named $g$-frames \cite{ws} which are including some extensions and types of frames such as frames of subspaces \cite{caku}, fusion frames \cite{fus,fusion}, oblique frames \cite{obli}, a class of time-frequency localization operators and generalized translation invariant (GTI) \cite{dtuykhmj}. Therefore, some concepts presented in frames such as duality, stability and Riesz-basis were also studied in $g$-frames \cite{wsper}.
\par Throughout this paper, $J$ is countable set, $\mathbb{N}$ is natural numbers and $\mathbb{C}$ is complex numbers, $\h$ and $\K$ are separable Hilbert spaces, $Id_{\h}$ denotes the identity operator on $\h$,  $B(\h)$ and $GL(\h)$ denote the set of bounded linear operators and invertible bounded linear operators on $\h$, respectively. Also, we will apply $B(\h,\K)$ for the set of bounded linear operators from $\h$ to $\K$. We use $\ker T$ and $\Ran T$ for the null space and range $T\in B(\h)$, respectively. 
Now, we summarize some facts about $g$-frames from \cite{Anaj and Rah, ws}. For more on related subjects to $g$-frames, we refer to \cite{gu, mus, AnajFR}. 
 \begin{defn}
We say that $\Lambda=\{\Lambda_{i}\in B(\h,\K_{i}):i\in \mathbb{N}\}$ is a generalized frame for $\h$  with respect to $\{\K_{i}:i\in \mathbb{N}\}$, or simply $g$-frame, if there are two constants $0<A_\Lambda\leq B_\Lambda<\infty$ such that
\begin{equation}\label{cgframe}
A_\Lambda\|f\|^{2}\leq\sum_{i\in \mathbb{N}}\|\Lambda_i f\|^{2}\leq B_\Lambda\|f\|^{2},\; f\in \h.
\end{equation}
We call $A_\Lambda,B_\Lambda$ the lower and upper $g$-frame bounds, respectively.
$\Lambda$ is called a tight $g$-frame if $A_\Lambda=B_\Lambda,$ and a
Parseval $g$-frame if $A_\Lambda=B_\Lambda=1.$ 
If for each $i\in \mathbb{N},$ $\K_{i}=\K,$ then, $\Lambda$ is called a $g$-frame for $\h$ with respect to $\K$. 
Note that for a family $\{\K_i\}_{i\in \mathbb{N}}$ of Hilbert spaces, there exists a Hilbert space $\K=\oplus_{i\in \mathbb{N}}\K_i$ such that for all $i\in \mathbb{N}$, $\K_i\subseteq\K$, where $\oplus_{i\in \mathbb{N}}\K_i$ is the direct sum of $\{\K_i\}_{i\in \mathbb{N}}$.
A family $\Lambda$ is called $g$-Bessel if the right hand inequality in (\ref{cgframe}) holds for all $f\in \h,$ in this case, $B_\Lambda$ is called the $g$-Bessel bound.
\end{defn}
 \begin{exa}\cite{ws}
 Let $\{f_i\}_{i\in \mathbb{N}}$ be a frame for $\h$. Suppose that 
 $\Lambda=\{\Lambda_{i}\in B(\h,\mathbb{C}):i\in \mathbb{N}\}$, where  $$\Lambda_i f=\langle f,f_i\rangle,\quad f\in\h.$$ 
 It is easy to see that $\Lambda$ is a $g$-frame.
 \end{exa}
For a $g$-frame $\Lambda$, there exists a unique positive and invertible operator $S_\Lambda:\h\rightarrow \h$ such that
\begin{eqnarray*}\label{di}
S_\Lambda f=\sum_{i\in \mathbb{N}}\Lambda_i^*\Lambda_i f,\quad f\in\h,
\end{eqnarray*}
 and $A_\Lambda. Id_{\h}\leq S_\Lambda\leq B_\Lambda. Id_{\h}.$ Consider the space
$$\Big(\sum_{i\in \mathbb{N}}\oplus\K_i\Big)_{\ell^2}=\Big\{\{g_i\}_{i\in \mathbb{N}}: g_i\in \K_i,\:i\in \mathbb{N}\: and\: \sum_{i\in \mathbb{N}}\|g_i\|^2<\infty\Big\}.$$
It is clear that, $\Big(\sum_{i\in \mathbb{N}}\oplus\K_i\Big)_{\ell^2}$ is a Hilbert space with pointwise operations and with the inner product given by $$\big\langle \{f_i\}_{i\in \mathbb{N}},\{g_i\}_{i\in \mathbb{N}}\big\rangle=\sum_{i\in \mathbb{N}}\langle f_i,g_i\rangle.$$
For a $g$-Bessl $\Lambda$, the synthesis operator $T_\Lambda:\Big(\sum_{i\in \mathbb{N}}\oplus\K_i\Big)_{\ell^2}\rightarrow\h$ is defined by 
\begin{align*}
T_\Lambda\big(\{g_i\}_{i\in \mathbb{N}}\big)=\sum_{i\in \mathbb{N}}\Lambda_i^*g_i.
\end{align*}
The adjoint of $T_\Lambda$, $T_\Lambda^*:\h\rightarrow\Big(\sum_{i\in \mathbb{N}}\oplus\K_i\Big)_{\ell^2}$ is called the analysis operator of $\Lambda$ and is as follow
\begin{align*}
T_\Lambda^* f=\{\Lambda_i f\}_{i\in \mathbb{N}},\quad f\in\h.
\end{align*}
It is obvious that $S_\Lambda=T_\Lambda T_\Lambda^*.$
For a $g$-frame $\Lambda=\{\Lambda_{i}\in B(\h,\K_i):i\in \mathbb{N}\}$, the sequence
$\widetilde{\Lambda}=\{\widetilde{\Lambda}:=\Lambda_i S_\Lambda^{-1}\in B(\h,\K_i):i\in \mathbb{N}\}$ is a $g$-frame  with lower and upper $g$-frame bounds $\frac{1}{B_{\Lambda}}$ and $\frac{1}{A_{\Lambda}}$, respectively, which is called canonical dual of $\Lambda$. For $g$-Bessel sequences $\Lambda$ and $\Theta$, we consider
$S_{\Lambda\Theta}:=T_\Lambda T_\Theta^*$.
 \begin{defn} Consider a sequence $\Lambda=\{\Lambda_{i}\in B(\h,\K_i):i\in \mathbb{N}\}$.
 \begin{enumerate}
 \item[(i)] 
 We say that $\Lambda$ is $g$-complete if $\{f: \Lambda_if=0, i\in \mathbb{N}\}=\{0\}$.
 \item[(ii)]
 We say that $\Lambda$ is a $g$-Riesz sequence if there are two constants $0<A_\Lambda\leq B_\Lambda<\infty$ such that for any finite set $\{g_i\}_{i\in I_n}$, 
\begin{align*}
A_\Lambda\sum_{i\in I_n}\|g_i\|^{2}\leq\|\sum_{i\in I_n}\Lambda_i^*g_i\|^{2}\leq B_\Lambda\sum_{i\in I_n}\|g_i\|^{2},\quad g_i\in \K_i.
\end{align*}
 \item[(iii)]
 We say that $\Lambda$ is a $g$-Riesz basis if $\Lambda$ is $g$-complete and $g$-Riesz sequence.
\item[(iv)] We say that $\Lambda$ is a $g$-orthonormal basis if it satisfies the following:
\begin{align*}
&\langle\Lambda_i^*g_i,\Lambda_j^* g_j\rangle=\delta_{i,j}\langle g_i,g_j\rangle,\quad i,j\in \mathbb{N}, g_i\in\K_i, g_j\in \K_j,
\\&\sum_{i\in \mathbb{N}}\|\Lambda_i f\|^2=\|f\|^2,\quad f\in\h.
\end{align*}
\end{enumerate}
\end{defn}
 A $g$-Riesz basis $\Lambda=\{\Lambda_{i}\in B(\h,\K_i):i\in \mathbb{N}\}$ is $g$-biorthonormal with respect to its canonical dual $\widetilde{\Lambda}=\{\widetilde{\Lambda}:=\Lambda_i S_\Lambda^{-1}\in B(\h,\K_i):i\in \mathbb{N}\}$ in the following sense
 \begin{align*}
 \langle\Lambda_i^*g_i,{\widetilde{\Lambda_j}}^* g_j\rangle=\delta_{i,j}\langle g_i,g_j\rangle,\quad i,j\in \mathbb{N}, g_i\in\K_i, g_j\in \K_j.
 \end{align*}
\begin{thm}\cite{AnajFR}\label{u.n}
Let $\Lambda=\{\Lambda_{i}\in B(\h,\K_i):i\in \mathbb{N}\}$ be a $g$-frame and $\Theta=\{\Theta_{i}\in B(\h,\K_i):i\in \mathbb{N}\}$  be a  $g$-orthonormal basis. Then there is a onto bounded operator $V :\h \rightarrow \h$ such that
$\Lambda_i=\Theta_i V^*$, for all $i\in\mathbb{N}$. If $\Lambda$ is a $g$-Riesz basis, then $V$ is invertible. If $\Lambda$ is a $g$-orthonormal bases, then $V$ is unitary.
\end{thm}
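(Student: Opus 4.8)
The plan is to recast the pointwise identity $\Lambda_i=\Theta_i V^*$ as a single statement about synthesis operators and then exploit the fact that, for a $g$-orthonormal basis, the synthesis operator is unitary. First I would observe that $\Lambda_i=\Theta_i V^*$ for all $i$ is equivalent to the operator identity $T_\Lambda=VT_\Theta$: indeed, for $f\in\h$ the right-hand side is $\Theta_i V^*f=\Theta_i(V^*f)$, so componentwise $\{\Lambda_i f\}_{i\in\mathbb{N}}=\{\Theta_i(V^*f)\}_{i\in\mathbb{N}}$ is exactly $T_\Lambda^* f=T_\Theta^* V^* f=(VT_\Theta)^* f$, and taking adjoints gives $T_\Lambda=VT_\Theta$. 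Thus the whole theorem reduces to finding $V\in B(\h)$ with $T_\Lambda=VT_\Theta$ together with the stated invertibility properties.

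Next I would show that $T_\Theta$ is unitary. The first defining property of a $g$-orthonormal basis gives, for any finitely supported $\{g_i\}$,
\[
\big\|T_\Theta(\{g_i\})\big\|^2=\Big\langle\sum_i\Theta_i^*g_i,\sum_j\Theta_j^*g_j\Big\rangle=\sum_{i,j}\delta_{i,j}\langle g_i,g_j\rangle=\sum_i\|g_i\|^2,
\]
so $T_\Theta$ extends by density to an isometry on $\big(\sum_{i\in\mathbb{N}}\oplus\K_i\big)_{\ell^2}$, i.e. $T_\Theta^*T_\Theta=Id$. The second property $\sum_i\|\Theta_i f\|^2=\|f\|^2$ says $\|T_\Theta^* f\|^2=\|f\|^2$, so $T_\Theta^*$ is also an isometry, giving $T_\Theta T_\Theta^*=Id_\h$. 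Hence $T_\Theta$ is unitary and $T_\Theta^{-1}=T_\Theta^*$.

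Given this, I would simply define $V:=T_\Lambda T_\Theta^*\in B(\h)$. Then $VT_\Theta=T_\Lambda T_\Theta^* T_\Theta=T_\Lambda$, which by the first paragraph yields $\Lambda_i=\Theta_i V^*$ for all $i$. Since $\Lambda$ is a $g$-frame its synthesis operator $T_\Lambda$ is surjective, and $T_\Theta^*$ is surjective (being unitary), so $V$ is onto, proving the first assertion. For the refinements I would note how the hypotheses upgrade $T_\Lambda$: if $\Lambda$ is a $g$-Riesz basis, the $g$-Riesz lower bound makes $T_\Lambda$ bounded below (hence injective with closed range), while $g$-completeness gives $\Ker T_\Lambda^*=\{0\}$, i.e.\ dense range, so $T_\Lambda$ is invertible and therefore so is $V=T_\Lambda T_\Theta^*$; and if $\Lambda$ is a $g$-orthonormal basis, the same computation as for $\Theta$ shows $T_\Lambda$ is unitary, whence $V$ is a product of two unitaries and is itself unitary.

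The only genuine subtleties are bookkeeping ones: keeping the adjoints straight when passing between $\Lambda_i=\Theta_i V^*$ and $T_\Lambda=VT_\Theta$, and justifying carefully that the two defining conditions of a $g$-orthonormal basis really force $T_\Theta$ to be unitary (in particular the convergence of $\sum_i\Theta_i^* g_i$ and the passage from finitely supported sequences to the full space by density). Once $T_\Theta$ is identified as a unitary, the construction $V=T_\Lambda T_\Theta^*$ and all three invertibility claims follow immediately from the standard properties of synthesis operators for $g$-frames, $g$-Riesz bases, and $g$-orthonormal bases.
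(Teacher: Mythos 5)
Your proof is correct. Note that the paper does not prove this theorem itself but quotes it from \cite{AnajFR}; your argument --- showing $T_\Theta$ is unitary from the two defining properties of a $g$-orthonormal basis and then setting $V=T_\Lambda T_\Theta^*$ (the operator the paper later denotes $S_{\Lambda\Theta}$) --- is exactly the standard construction used there, and the upgrades to invertibility and unitarity of $V$ follow as you describe.
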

\begin{thm}\cite{ws}\label{framegframe}
 Let for $i\in \mathbb{N}$, $\{e_{i,j}\}_{j\in J_i}$ be an orthonormal basis for $\K_i$. Sequence $\Lambda=\{\Lambda_{i}\in B(\h,\K_i):i\in \mathbb{N}\}$ is a $g$-frame (respectively, $g$-Bessel family, $g$-Riesz basis, $g$-orthonormal basis) if and only if 
$\{\Lambda_i^*e_{i,j}\}_{i\in \mathbb{N}, j\in J_i}$ is a frame (respectively, Bessel sequence, Riesz basis, orthonormal basis).
\end{thm}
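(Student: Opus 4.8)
The plan is to reduce every one of the four equivalences to a single pointwise identity coming from Parseval's equality in each $\K_i$. Fix $f\in\h$ and $i\in\mathbb{N}$. Since $\{e_{i,j}\}_{j\in J_i}$ is an orthonormal basis for $\K_i$, Parseval's identity gives
$$\|\Lambda_i f\|^2=\sum_{j\in J_i}|\langle\Lambda_i f,e_{i,j}\rangle|^2=\sum_{j\in J_i}|\langle f,\Lambda_i^* e_{i,j}\rangle|^2.$$
Summing over $i$ and interchanging the two nonnegative sums by Tonelli's theorem yields
$$\sum_{i\in\mathbb{N}}\|\Lambda_i f\|^2=\sum_{i\in\mathbb{N}}\sum_{j\in J_i}|\langle f,\Lambda_i^* e_{i,j}\rangle|^2.$$
Consequently the $g$-frame inequalities \eqref{cgframe} hold for $\Lambda$ with bounds $A_\Lambda,B_\Lambda$ if and only if the frame inequalities for $\{\Lambda_i^* e_{i,j}\}_{i\in\mathbb{N},\,j\in J_i}$ hold with the same bounds; the $g$-Bessel/Bessel equivalence is simply the one-sided version of this statement. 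This settles the first two cases at once.

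For the Riesz-basis case the key device is the expansion of a block $g_i\in\K_i$ in its orthonormal basis. Writing $g_i=\sum_{j\in J_i}c_{i,j}e_{i,j}$ with $c_{i,j}=\langle g_i,e_{i,j}\rangle$, one has $\Lambda_i^* g_i=\sum_{j}c_{i,j}\Lambda_i^* e_{i,j}$ and $\|g_i\|^2=\sum_j|c_{i,j}|^2$, so that
$$\sum_{i}\Lambda_i^* g_i=\sum_{i,j}c_{i,j}\Lambda_i^* e_{i,j},\qquad \sum_i\|g_i\|^2=\sum_{i,j}|c_{i,j}|^2.$$
Thus the two-sided $g$-Riesz estimate for finite blocks $\{g_i\}_{i\in I_n}$ translates into the Riesz estimate for the scalar coefficients $\{c_{i,j}\}$. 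I would phrase both inequalities in their equivalent form over all square-summable coefficient families---justified by the standard continuity/density extension of the Riesz inequalities from finite to $\ell^2$ combinations---so that the correspondence remains exact even when some $J_i$ is infinite; this is the one point that genuinely requires care. Completeness transfers because $\langle f,\Lambda_i^* e_{i,j}\rangle=\langle\Lambda_i f,e_{i,j}\rangle$ vanishes in $j$ exactly when $\Lambda_i f=0$, so $g$-completeness of $\Lambda$ is equivalent to completeness of $\{\Lambda_i^* e_{i,j}\}$; combined with the Riesz-sequence equivalence this gives the $g$-Riesz-basis versus Riesz-basis statement.

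Finally, for the orthonormal-basis case I would read off orthonormality by specializing the defining relation $\langle\Lambda_i^* g_i,\Lambda_{i'}^* g_{i'}\rangle=\delta_{i,i'}\langle g_i,g_{i'}\rangle$ to $g_i=e_{i,j}$ and $g_{i'}=e_{i',j'}$, obtaining $\langle\Lambda_i^* e_{i,j},\Lambda_{i'}^* e_{i',j'}\rangle=\delta_{i,i'}\delta_{j,j'}$, which is precisely orthonormality of $\{\Lambda_i^* e_{i,j}\}$; the converse follows by expanding arbitrary $g_i,g_{i'}$ bilinearly in the two orthonormal bases. The Parseval condition $\sum_i\|\Lambda_i f\|^2=\|f\|^2$ matches the normalized frame identity through the displayed pointwise identity, so the two orthonormal-basis notions coincide. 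The main obstacle throughout is not any single estimate but the bookkeeping of the double index $(i,j)$ together with the finite-versus-$\ell^2$ subtlety in the Riesz step; once the pointwise Parseval identity is in place, each of the four equivalences follows directly.
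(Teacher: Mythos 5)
Your argument is correct, and it is essentially the standard proof of this result: the paper itself gives no proof here (the theorem is quoted from Sun's paper \cite{ws}), and the argument there runs exactly along your lines, namely the pointwise Parseval identity $\|\Lambda_i f\|^2=\sum_{j\in J_i}|\langle f,\Lambda_i^*e_{i,j}\rangle|^2$ for the frame/Bessel and Parseval conditions, the block expansion $g_i=\sum_j\langle g_i,e_{i,j}\rangle e_{i,j}$ for the Riesz case, and specialization/bilinear expansion for orthonormality. You correctly flag the only delicate point, the passage between finite block families and finite (versus $\ell^2$) scalar coefficient families when some $J_i$ is infinite, and your density/continuity resolution of it is sound.
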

Now we summarize some results of article \cite{gframerep} in which we generalize the results of articles \cite{main, chretal} to introduce the representation of $g$-frames with bounded operators.
\begin{rem}\label{frem}
Consider a frame
$F=\{f_i\}_{i\in{\mathbb{N}}}=\{T^{i-1}f_1\}_{i\in{\mathbb{N}}}$ for
$\h$ with $T\in{B(\h)}$. For the $g$-frame $\Lambda=\{\Lambda_{i}\in
{B(\h,\mathbb{C})}: i\in{\mathbb{N}}\}$ where 
\begin{align*}
\Lambda_i f=\langle f,f_i\rangle,\quad f\in\h,
\end{align*}
we have
\begin{align*}
\Lambda_{i+1}f=\langle f,f_{i+1}\rangle=\langle
f,Tf_{i}\rangle=\langle T^*f,f_{i}\rangle=\Lambda_{i}T^*f,\quad
f\in{\h}.
\end{align*}
Therefore, $\Lambda_i=\Lambda_1
(T^*)^{i-1}, i\in{\mathbb{N}}$. Conversely, if we consider a $g$-frame
$\Lambda=\{\Lambda_i\in{B(\h,\mathbb{C})}:i\in{\mathbb{N}}\}=\{\Lambda_{1}T^{i-1}:i\in{\mathbb{N}}\}$ for
$T\in{B(\h)}$, then by the Riesz representation theorem, $\Lambda_{i}f=\langle f,f_{i}\rangle, i\in{\mathbb{N}}$ and
$f,f_i\in{\h}$, where $F=\{f_{i}\}_{i\in{\mathbb{N}}}$ is a frame that $f_i=(T^*)^{i-1}f_1, i\in\mathbb{N}$.
\end{rem}
Now, we have been motivated to study $g$-frames $\Lambda=\{\Lambda_i\in B(\h,\K):i\in\mathbb{N}\},$ where
$\Lambda_i=\Lambda_1 T^{i-1}$ with $T\in{B(\h)}$.
\begin{defn}
We say that a $g$-frame $\Lambda=\{\Lambda_i\in B(\h,\K):i\in\mathbb{N}\}$ has a representation if there is a
$T\in{B(\h)}$ such that $\Lambda_{i}=\Lambda_1 T^{i-1},
i\in{\mathbb{N}}$. In the affirmative case, we say that $\Lambda$ is
represented by $T$.
\end{defn}
The following theorem shows that for $g$-frames $\Lambda=\{\Lambda_1T^{i-1}:i\in\mathbb{N}\}$, the boundedness of $T$ is equivalent to the invariance of $\ker T_\Lambda$ under the right-shift operator.
\begin{thm}\label{MT}
Let $\Lambda=\{\Lambda_{i}\in{B(\h,\K)}: i\in{\mathbb{N}}\}$ be a
$g$-frame such that for every finite set 
$\{g_{i}\}_{i\in{I_{n}}}\subset\K$,
$\sum_{i\in{I_{n}}}\Lambda_{i}^{*}g_{i}=0$ implies $g_{i}=0$ for
every $i\in{I_{n}}$. 
Suppose that $\ker T_{\Lambda}$ is
invariant under the right-shift operator. Then,
$\Lambda$ is represented by  $T\in{B(\h)}$, where $\|T\|\leq\sqrt{B_{\Lambda}A_{\Lambda}^{-1}}$.
\end{thm}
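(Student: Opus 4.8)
The plan is to construct the adjoint $W:=T^{*}$ first, since the desired relations $\Lambda_{i+1}=\Lambda_i T$ (which, by induction, are equivalent to $\Lambda_i=\Lambda_1 T^{i-1}$) become, after taking adjoints, the relations $\Lambda_{i+1}^{*}=W\Lambda_i^{*}$. Throughout I would use that the synthesis operator $T_\Lambda$ of the $g$-frame is bounded and surjective with $\|T_\Lambda\|\le\sqrt{B_\Lambda}$ (because $T_\Lambda T_\Lambda^{*}=S_\Lambda$ and $\|S_\Lambda\|\le B_\Lambda$), that $S_\Lambda$ is positive and invertible with $S_\Lambda^{-1}\le A_\Lambda^{-1}Id_{\h}$, and that the right-shift $\mathcal{T}$ on $\big(\sum_{i\in\mathbb{N}}\oplus\K\big)_{\ell^2}$ is a linear isometry. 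First I would rephrase the hypothesis that $\ker T_\Lambda$ is $\mathcal{T}$-invariant in the working form: whenever $\sum_i\Lambda_i^{*}g_i=0$ one also has $\sum_i\Lambda_{i+1}^{*}g_i=0$; equivalently, $T_\Lambda\mathcal{T}\mathbf{g}$ depends only on $T_\Lambda\mathbf{g}$.

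Next I would define $W$ on $\h$. Since $\Lambda$ is a $g$-frame, $T_\Lambda$ maps onto $\h$, so every $x\in\h$ equals $T_\Lambda\mathbf{g}$ for some $\mathbf{g}$, and I set $Wx:=T_\Lambda(\mathcal{T}\mathbf{g})$. The invariance of $\ker T_\Lambda$ under $\mathcal{T}$ is exactly what makes this single-valued: if $T_\Lambda\mathbf{g}=T_\Lambda\mathbf{h}$ then $\mathbf{g}-\mathbf{h}\in\ker T_\Lambda$, hence $\mathcal{T}(\mathbf{g}-\mathbf{h})\in\ker T_\Lambda$ and $T_\Lambda\mathcal{T}\mathbf{g}=T_\Lambda\mathcal{T}\mathbf{h}$; linearity is immediate. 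Feeding in the sequence with $g$ in the $i$-th slot and $0$ elsewhere gives $W\Lambda_i^{*}g=\Lambda_{i+1}^{*}g$ for all $i$ and all $g\in\K$, i.e. the identity $\Lambda_{i+1}^{*}=W\Lambda_i^{*}$ on the generators $\Lambda_i^{*}\K$ (whose span is dense by $g$-completeness, and whose finite representations are unique by the finite linear-independence hypothesis, so the prescription is unambiguous).

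The crux is the norm estimate, where the sharp constant forces a careful choice of preimage. For $x\in\h$ I would not take an arbitrary $\mathbf{g}$ with $T_\Lambda\mathbf{g}=x$, because finite linear independence alone yields no uniform lower $g$-Riesz bound and so $\|\mathbf{g}\|$ need not be controlled by $\|x\|$; instead I would take the canonical (minimal-norm) preimage $\mathbf{g}=\{\Lambda_i S_\Lambda^{-1}x\}_{i}$, for which $T_\Lambda\mathbf{g}=S_\Lambda S_\Lambda^{-1}x=x$. Its norm satisfies $\|\mathbf{g}\|^{2}=\sum_i\|\Lambda_i S_\Lambda^{-1}x\|^{2}=\langle x,S_\Lambda^{-1}x\rangle\le A_\Lambda^{-1}\|x\|^{2}$. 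Since $\mathcal{T}$ is isometric and $\|T_\Lambda\|\le\sqrt{B_\Lambda}$,
\[
\|Wx\|=\|T_\Lambda\mathcal{T}\mathbf{g}\|\le\sqrt{B_\Lambda}\,\|\mathcal{T}\mathbf{g}\|=\sqrt{B_\Lambda}\,\|\mathbf{g}\|\le\sqrt{B_\Lambda A_\Lambda^{-1}}\,\|x\|,
\]
so $W$ is bounded with $\|W\|\le\sqrt{B_\Lambda A_\Lambda^{-1}}$.

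Finally I would set $T:=W^{*}$. Taking adjoints in $W\Lambda_i^{*}=\Lambda_{i+1}^{*}$ gives $\Lambda_i T=\Lambda_{i+1}$ for every $i$, and an induction yields $\Lambda_i=\Lambda_1 T^{i-1}$, i.e. $\Lambda$ is represented by $T$; moreover $\|T\|=\|W\|\le\sqrt{B_\Lambda A_\Lambda^{-1}}$. I expect the only genuine difficulty to be the boundedness step: recognizing that the naive estimate through arbitrary coefficients fails, and that the canonical dual coefficients $\Lambda_i S_\Lambda^{-1}x$ are precisely what convert the two $g$-frame bounds into the factor $\sqrt{B_\Lambda A_\Lambda^{-1}}$.
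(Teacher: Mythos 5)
Your proof is correct. The paper itself states Theorem \ref{MT} without giving a proof (it is imported from \cite{gframerep}), but your argument --- defining $W=T^{*}$ through the synthesis operator and the right shift, using the $\mathcal{T}$-invariance of $\ker T_{\Lambda}$ for well-definedness and the canonical coefficients $\{\Lambda_i S_{\Lambda}^{-1}x\}_{i\in\mathbb{N}}$ to obtain the bound $\sqrt{B_{\Lambda}A_{\Lambda}^{-1}}$ --- is exactly the $g$-frame analogue of the Christensen--Hasannasab construction that the cited source follows, so it matches the intended proof in both structure and constants.
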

\begin{cor}\label{oth}
Every $g$-orthonormal and $g$-Riesz bases has a representation.
\end{cor}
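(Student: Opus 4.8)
The plan is to derive the corollary directly from Theorem~\ref{MT} by verifying that every $g$-orthonormal basis and every $g$-Riesz basis satisfies both hypotheses of that theorem: first, the finite linear independence condition (that $\sum_{i\in I_n}\Lambda_i^* g_i = 0$ forces all $g_i=0$), and second, the invariance of $\ker T_\Lambda$ under the right-shift operator. Once both are checked, Theorem~\ref{MT} immediately produces the bounded operator $T$ with $\Lambda_i = \Lambda_1 T^{i-1}$, which is exactly the assertion that $\Lambda$ has a representation.

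For the first hypothesis, I would argue that the $g$-Riesz sequence property gives it essentially for free. If $\Lambda$ is a $g$-Riesz basis, then by definition it is a $g$-Riesz sequence, so the lower inequality $A_\Lambda \sum_{i\in I_n}\|g_i\|^2 \leq \|\sum_{i\in I_n}\Lambda_i^* g_i\|^2$ holds for every finite family; hence $\sum_{i\in I_n}\Lambda_i^* g_i = 0$ forces $\sum_{i\in I_n}\|g_i\|^2 = 0$, i.e.\ each $g_i=0$. For a $g$-orthonormal basis the same conclusion follows even more transparently from the biorthogonality relation $\langle \Lambda_i^* g_i, \Lambda_j^* g_j\rangle = \delta_{i,j}\langle g_i, g_j\rangle$: pairing $\sum_{i\in I_n}\Lambda_i^* g_i$ with a single $\Lambda_k^* g_k$ isolates $\|g_k\|^2$. (Alternatively, one may invoke Theorem~\ref{framegframe}, which says that a $g$-orthonormal basis corresponds to an orthonormal basis $\{\Lambda_i^* e_{i,j}\}$, and a $g$-Riesz basis to a Riesz basis, both of which are finitely linearly independent.)

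The second hypothesis, invariance of $\ker T_\Lambda$ under the right-shift $\mathcal{T}$, is where the real content lies, and I expect this to be the main obstacle. The key observation is that for a $g$-Riesz basis the lower bound forces $T_\Lambda$ to be injective, so $\ker T_\Lambda = \{0\}$; likewise for a $g$-orthonormal basis, where the associated sequence is an orthonormal basis and $T_\Lambda$ is (after identification) a unitary. In either case $\ker T_\Lambda$ is the trivial subspace $\{0\}$, and the zero subspace is trivially invariant under any operator, in particular under $\mathcal{T}$. Thus what could have been a delicate shift-invariance argument collapses to the remark that injectivity of the synthesis operator makes its kernel trivial.

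Putting these together, the proof is short: given $\Lambda$ a $g$-orthonormal or $g$-Riesz basis, I would first note that the $g$-Riesz sequence inequality (or the orthonormal-basis biorthogonality) yields both the finite-independence condition and the injectivity of $T_\Lambda$; the latter gives $\ker T_\Lambda = \{0\}$, which is vacuously invariant under the right-shift operator. Both hypotheses of Theorem~\ref{MT} are therefore met, and that theorem supplies the desired $T \in B(\h)$ representing $\Lambda$, completing the argument. The only point requiring care is confirming that the lower $g$-Riesz bound genuinely forces injectivity of $T_\Lambda$ rather than merely of the analysis operator, but this follows from writing $T_\Lambda(\{g_i\}) = \sum_i \Lambda_i^* g_i$ and applying the lower inequality to arbitrary (not merely finite) sequences via a standard density or limiting argument.
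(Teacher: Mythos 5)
Your proof is correct and follows the route the paper intends: the corollary sits directly under Theorem~\ref{MT}, and your verification of its two hypotheses --- finite independence from the lower $g$-Riesz (or $g$-orthonormality) inequality, and shift-invariance of $\ker T_\Lambda$ because that kernel is trivial, since the lower Riesz bound extends by a limiting argument to make the synthesis operator bounded below --- is exactly the intended derivation. No gaps; the observation that triviality of $\ker T_\Lambda$ makes the invariance condition vacuous is the whole point.
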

\begin{rem}\label{gl}
Consider a $g$-frame $\Lambda=\{\Lambda_{i}\in B(\h,\K):i\in \mathbb{N}\}$ which is represented by $T$. For $S\in GL(\h)$, the family 
$\Lambda S=\{\Lambda_{i} S\in B(\h,\K):i\in \mathbb{N}\}$ is a $g$-frame \cite[Corollary 2.26]{AnajFR}, which is represented by $S^{-1}TS$.
\end{rem}  
In this paper, we generalize some recent results of  \cite{chretal,RashNej} to investigate properties of representations for $g$-frames with bounded operators.
\section{{\textbf G-frame representation properties}}
In this section, we examine some properties of operator representations of $g$-frames, including being closed range, injective, unitary and compact.
\par In the following results, we first specify the range of adjoint of $g$-frame operator representations to indicates that the range of operator representations is closed. Then, we get necessary and sufficient conditions for $g$-frames to have injective operator representations.

\begin{thm}\label{cloran}
Let $\Lambda=\{\Lambda_i\in B(\h,\K):i\in \mathbb{N}\}$ be a $g$-frame that is represented by $T$. Then $\Ran T^*=\overline{\Span}\{T^*\Lambda_i^*e_j\}_{i\in{\mathbb{N}},j\in{J}}$, where
$\{e_j\}_{j\in J}$ is an orthonormal basis for $K$, and $\Ran T$ is close.
\end{thm}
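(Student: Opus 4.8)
The plan is to collapse the two assertions into the single statement that $\Ran T^*$ is closed, and then to prove that via the synthesis operator. First I would record the two structural facts coming from the representation $\Lambda_i=\Lambda_1 T^{i-1}$. Since $\Lambda_{i+1}=\Lambda_i T$, taking adjoints gives $\Lambda_{i+1}^*=T^*\Lambda_i^*$, hence
\[
T^*\Lambda_i^* e_j=\Lambda_{i+1}^* e_j,\qquad i\in\mathbb N,\ j\in J.
\]
By Theorem \ref{framegframe} (with $e_{i,j}=e_j$, as all $\K_i=\K$), $\{\Lambda_i^* e_j\}_{i\in\mathbb N,j\in J}$ is a frame for $\h$, so $\overline{\Span}\{\Lambda_i^* e_j\}=\h$. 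As $T^*$ is continuous and $\Span\{\Lambda_i^* e_j\}$ is dense, every $T^* g$ is a limit of elements $T^* h$ with $h\in\Span\{\Lambda_i^* e_j\}$, i.e. of elements of $\Span\{T^*\Lambda_i^* e_j\}$; this yields $\Ran T^*\subseteq\overline{\Span}\{T^*\Lambda_i^* e_j\}$, while the reverse containment into $\overline{\Ran T^*}$ is immediate. Thus $\overline{\Span}\{T^*\Lambda_i^* e_j\}=\overline{\Ran T^*}$, and the claimed equality with $\Ran T^*$, as well as the closedness of $\Ran T$ (by the closed range theorem), both reduce to showing that $\Ran T^*$ is closed.

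Next I would attack this genuine content through the synthesis operator. Because $\Lambda$ is a $g$-frame, $T_\Lambda$ is bounded and surjective (equivalently $S_\Lambda=T_\Lambda T_\Lambda^*$ is invertible). Writing $\mathcal T$ for the right-shift on $\big(\sum_i\oplus\K\big)_{\ell^2}$, $\mathcal T(\{g_i\})=(0,g_1,g_2,\dots)$, the relation $\Lambda_{i+1}^*=T^*\Lambda_i^*$ gives the intertwining identity $T^* T_\Lambda=T_\Lambda\mathcal T$. Using surjectivity of $T_\Lambda$,
\[
\Ran T^*=\Ran(T^* T_\Lambda)=\Ran(T_\Lambda\mathcal T)=T_\Lambda(\Ran\mathcal T),
\]
and since $T_\Lambda$ is surjective, $T_\Lambda(\Ran\mathcal T)$ is closed if and only if $\Ran\mathcal T+\Ker T_\Lambda$ is closed. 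Now $\Ran\mathcal T=\{h:h_1=0\}=\Ker\pi_1$, where $\pi_1(\{h_i\})=h_1$ is the bounded first-coordinate projection onto $\K$. For any subspace $N$ one has $\Ker\pi_1+N=\pi_1^{-1}(\pi_1 N)$, so $\Ran\mathcal T+\Ker T_\Lambda$ is closed if and only if $\pi_1(\Ker T_\Lambda)$ is closed in $\K$. I would also note, for later use, that $T^* T_\Lambda=T_\Lambda\mathcal T$ forces $\Ker T_\Lambda$ to be $\mathcal T$-invariant.

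The main obstacle is precisely this last point: that $\pi_1(\Ker T_\Lambda)=\{g\in\K:\exists\,(g_i)_{i\ge2}\in\ell^2,\ \Lambda_1^* g+\sum_{i\ge2}\Lambda_i^* g_i=0\}$ is closed. In the scalar frame setting of \cite{RashNej} one has $\K=\mathbb C$, so $\pi_1(\Ker T_\Lambda)$ is a subspace of $\mathbb C$ and is closed for free; this is exactly why closedness of the range is automatic there. For $g$-frames, however, $\K$ may be infinite dimensional, and closedness of $\pi_1(\Ker T_\Lambda)$ is no longer formal — it must be extracted from the quantitative lower bound $A_\Lambda\|f\|^2\le\sum_i\|\Lambda_i f\|^2$. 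Concretely, membership $g\in\pi_1(\Ker T_\Lambda)$ says that $\Lambda_1^* g$ lies in the range of the synthesis operator of the deleted family $\{\Lambda_i\}_{i\ge2}$, and I would try to show that the minimal-norm tail $(g_i)_{i\ge2}$ depends boundedly on $g$, so that a convergent sequence in $\pi_1(\Ker T_\Lambda)$ admits convergent (or weakly convergent) tails whose limit realizes the limiting $g$. I expect this boundedness estimate, where the lower $g$-frame bound and the $\mathcal T$-invariance of $\Ker T_\Lambda$ must be combined, to be the delicate heart of the proof; once it is established, $\Ran T^*$ is closed, and together with the first paragraph this simultaneously gives $\Ran T^*=\overline{\Span}\{T^*\Lambda_i^* e_j\}$ and the closedness of $\Ran T$.
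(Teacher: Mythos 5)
Your first paragraph, and the reduction carried out in your second, are correct: the identity $T^*\Lambda_i^*e_j=\Lambda_{i+1}^*e_j$ together with the density of $\Span\{\Lambda_i^*e_j\}_{i\in\mathbb{N},j\in J}$ does show $\overline{\Span}\{T^*\Lambda_i^*e_j\}=\overline{\Ran T^*}$, so both assertions collapse to the closedness of $\Ran T^*$; this matches the easy half of the paper's argument. The intertwining $T^*T_\Lambda=T_\Lambda\mathcal{T}$, the identity $\Ran T^*=T_\Lambda(\Ran\mathcal{T})$, and the chain of equivalences down to ``$\pi_1(\Ker T_\Lambda)$ is closed in $\K$'' are all valid. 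But the proof stops exactly where the content begins: that last closedness is announced as the ``delicate heart'' and then only a hoped-for strategy (bounded dependence of a minimal-norm tail on $g$) is sketched, with no estimate produced. Since every step of your reduction is an \emph{equivalence}, what you have written is a reformulation of the theorem, not a proof of it. For comparison, the paper's proof disposes of the same point by asserting that $\{T^*\Lambda_i^*e_j\}_{i\in\mathbb{N},j\in J}=\{\Lambda_i^*e_j\}_{i\geq 2,j\in J}$ is a frame for its closed span $\h_0$; that assertion is precisely as unsupported as your missing step, because a frame from which infinitely many elements have been deleted (here the whole layer $i=1$ when $\dim\K=\infty$) need not be a frame sequence.

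Moreover, the gap cannot be closed under the stated hypotheses, so the boundedness estimate you are hoping for does not exist in general. Take $\h=\K=\ell^2$, $\Lambda_1=Id_{\h}$, and let $T$ be the diagonal operator with diagonal entries $2^{-n}$. Then $\|T\|=\tfrac12$, hence
\begin{equation*}
\|f\|^2\leq\sum_{i\in\mathbb{N}}\|\Lambda_1T^{i-1}f\|^2\leq\sum_{i\in\mathbb{N}}\|T\|^{2(i-1)}\|f\|^2=\tfrac{4}{3}\|f\|^2,
\end{equation*}
so $\{\Lambda_1T^{i-1}\}_{i\in\mathbb{N}}$ is a $g$-frame represented by $T$, while $\Ran T=\{(c_n):(2^nc_n)\in\ell^2\}$ is dense and proper, hence not closed. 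So your own diagnosis --- that closedness of $\pi_1(\Ker T_\Lambda)$ is automatic only when $\K$ is finite dimensional, as in the scalar frame case --- is exactly right, and in the infinite-dimensional case it genuinely fails. Any correct completion of your argument (or of the theorem itself) therefore requires an additional hypothesis such as $\dim\K<\infty$, under which $\pi_1(\Ker T_\Lambda)$ is a subspace of a finite-dimensional space and your reduction finishes the proof immediately.
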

\begin{proof}
By Theorem \ref{framegframe}, $\{\Lambda_i^*e_j\}_{i\in\mathbb{N},j\in J}$ is a frame for $\h$, and so for every $f\in\h$, we have
\begin{equation*}
T^*f=T^*\Big(\sum_{i\in\mathbb{N},j\in J}c_{ij}\Lambda_i^*e_j\Big)=\sum_{i\in\mathbb{N},j\in J}c_{ij}T^*\Lambda_i^*e_j.
\end{equation*}
Thus, $\Ran T^*\subseteq\overline{\Span}\{T^*\Lambda_i^*e_j\}_{i\in\mathbb{N},j\in J}:=\h_0$. On the other hand, since $\{T^*\Lambda_i^*e_j\}_{i\in{\mathbb{N}},j\in{J}}$, is a frame for $\h_0$, we have
\begin{equation*}
g=\sum_{i\in\mathbb{N},j\in J}d_{ij}T^*\Lambda_i^*e_j=T^*\Big(\sum_{i\in\mathbb{N},j\in J}c_{ij}\Lambda_i^*e_j\Big),\quad g\in\h_0.
\end{equation*}
Then $\Ran T^*=\h_0$ is close and so $\Ran T$ is close.
\end{proof}
\begin{prop}\label{inject}
Let $\Lambda=\{\Lambda_1T^{i-1}\in B(\h,\K):i\in \mathbb{N}\}$ be a $g$-frame such that $\|\Lambda_1\|<\sqrt{A_\Lambda}$. Then $T$ is injective.
\end{prop}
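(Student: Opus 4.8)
The plan is to argue by contradiction through the lower $g$-frame bound, exploiting the fact that the representation $\Lambda_i=\Lambda_1 T^{i-1}$ forces almost every term of the $g$-frame sum to vanish on $\ker T$. This turns the lower bound in (\ref{cgframe}) into a comparison between $A_\Lambda$ and $\|\Lambda_1\|^2$, which the hypothesis rules out.

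First I would fix an arbitrary $f\in\h$ with $Tf=0$ and aim to conclude $f=0$. Since $Tf=0$, for every $i\geq 2$ we have $T^{i-1}f=T^{i-2}(Tf)=0$, and therefore $\Lambda_i f=\Lambda_1 T^{i-1}f=0$ for all $i\geq 2$. Consequently the only surviving term in the $g$-frame sum is the one indexed by $i=1$, so that
$$\sum_{i\in\mathbb{N}}\|\Lambda_i f\|^2=\|\Lambda_1 f\|^2.$$

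Next I would substitute this collapsed identity into the lower inequality of (\ref{cgframe}) and estimate the right-hand side by the operator norm of $\Lambda_1$:
$$A_\Lambda\|f\|^2\leq\sum_{i\in\mathbb{N}}\|\Lambda_i f\|^2=\|\Lambda_1 f\|^2\leq\|\Lambda_1\|^2\|f\|^2.$$
If $f\neq 0$, then $\|f\|^2>0$ and dividing through yields $A_\Lambda\leq\|\Lambda_1\|^2$, which contradicts the hypothesis $\|\Lambda_1\|<\sqrt{A_\Lambda}$, that is $\|\Lambda_1\|^2<A_\Lambda$. Hence $f=0$, so $\ker T=\{0\}$ and $T$ is injective.

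Each step is an elementary inequality, so I do not expect a serious obstacle. The only point that genuinely requires the hypotheses is the collapse of the sum, which rests precisely on the representation structure $\Lambda_i=\Lambda_1 T^{i-1}$ together with the identity $T^{i-1}f=T^{i-2}(Tf)=0$ for $i\geq 2$; and the \emph{strict} inequality $\|\Lambda_1\|<\sqrt{A_\Lambda}$ is exactly what converts the resulting chain of inequalities into a strict contradiction whenever $f\neq 0$.
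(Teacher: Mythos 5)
Your proof is correct and follows essentially the same route as the paper: both arguments combine the lower $g$-frame bound with the estimate $\|\Lambda_1 T^{i-1}f\|\leq\|\Lambda_1\|\,\|T^{i-1}f\|$ and the fact that all terms with $i\geq 2$ vanish on $\ker T$. The paper merely phrases this as a general inequality $\sum_{i\in\mathbb{N}}\|T^if\|^2\geq\big(\frac{A_\Lambda}{\|\Lambda_1\|^2}-1\big)\|f\|^2$ valid for every $f$, whereas you specialize to $f\in\ker T$ from the outset; the content is identical.
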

\begin{proof}
For every $f\in\h$,
\begin{equation*}
A_\Lambda\|f\|^2\leq\sum_{i\in\mathbb{N}}\|\Lambda_1T^{i-1}f\|^2\leq\|\Lambda_1\|^2\big(\|f\|^2+\sum_{i\in\mathbb{N}}\|T^if\|^2\big),
\end{equation*}
thus $\sum_{i\in\mathbb{N}}\|T^if\|^2\geq\big(\frac{A_\Lambda}{\|\Lambda_1\|^2}-1\big)\|f\|^2$ and since
$\frac{A_\Lambda}{\|\Lambda_1\|^2}-1>0$, $T$ is injective.
\end{proof}
\begin{thm}
Let $\Lambda=\{\Lambda_i\in B(\h,\K):i\in \mathbb{N}\}$ be a $g$-frame that is represented by $T$. Then the following are equivalent.
\begin{enumerate}
\item[(i)] $T$ is injective.
\item[(ii)] $\Ran(S_\Lambda^{-1}\Lambda_1^*)\cap\Ker T=\{0\}$.
\item[(iii)] $\Ran\Lambda_1^*\subseteq\Ran T^*$.
\end{enumerate}
\end{thm}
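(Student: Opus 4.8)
The plan is to route everything through the closed-range information already secured in Theorem \ref{cloran}. Since $\Lambda$ is a $g$-frame represented by $T$, that theorem gives that $\Ran T$, and hence $\Ran T^*$, is closed. Combining this with the general identity $\overline{\Ran T^*}=(\Ker T)^\perp$, I obtain the convenient reformulation
\[
T \text{ is injective} \quad\Longleftrightarrow\quad \Ker T=\{0\} \quad\Longleftrightarrow\quad \Ran T^*=\h .
\]
So the whole theorem reduces to showing that each of (ii) and (iii) is equivalent to $\Ran T^*=\h$. I will also use two structural facts repeatedly: the factorization $\Lambda_i^*=(T^*)^{i-1}\Lambda_1^*$ coming from $\Lambda_i=\Lambda_1T^{i-1}$, and that $\{\Lambda_i^*e_j\}_{i\in\mathbb{N},j\in J}$ is a frame for $\h$ (Theorem \ref{framegframe}), so that its closed linear span is all of $\h$.

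For (i)$\Leftrightarrow$(iii): if $T$ is injective then $\Ran T^*=\h$, which trivially contains $\Ran\Lambda_1^*$. Conversely, assuming $\Ran\Lambda_1^*\subseteq\Ran T^*$, the factorization $\Lambda_i^*=T^*\big((T^*)^{i-2}\Lambda_1^*\big)$ shows $\Ran\Lambda_i^*\subseteq\Ran T^*$ for every $i\geq 2$ as well; hence every frame vector $\Lambda_i^*e_j$ lies in $\Ran T^*$. Since $\Ran T^*$ is closed and the $\Lambda_i^*e_j$ span $\h$ densely, I conclude $\Ran T^*=\h$, i.e.\ $T$ is injective.

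For (i)$\Leftrightarrow$(ii): the forward direction is immediate, since $T$ injective forces $\Ker T=\{0\}$ and the intersection to be $\{0\}$. The substantive direction is (ii)$\Rightarrow$(i), and here the key move — which I expect to be the crux of the argument — is to feed a vector $f\in\Ker T$ into the canonical reconstruction $f=\sum_{i\in\mathbb{N}}S_\Lambda^{-1}\Lambda_i^*\Lambda_i f$. Because $Tf=0$ yields $T^{i-1}f=0$ for all $i\geq 2$, one has $\Lambda_i f=\Lambda_1T^{i-1}f=0$ for $i\geq 2$, so the series collapses to its single first term and $f=S_\Lambda^{-1}\Lambda_1^*\Lambda_1 f\in\Ran(S_\Lambda^{-1}\Lambda_1^*)$. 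Thus $f\in\Ran(S_\Lambda^{-1}\Lambda_1^*)\cap\Ker T=\{0\}$, forcing $f=0$ and $\Ker T=\{0\}$. The only points needing care are the legitimacy of pulling the bounded operator $S_\Lambda^{-1}$ through the convergent series and the observation that the collapse of the reconstruction is exactly what manufactures membership in $\Ran(S_\Lambda^{-1}\Lambda_1^*)$.
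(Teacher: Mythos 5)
Your proof is correct, and its core step --- feeding $f\in\Ker T$ into the reconstruction $f=\sum_{i}S_\Lambda^{-1}\Lambda_i^*\Lambda_i f$ and watching all terms with $i\geq 2$ vanish because $\Lambda_i f=\Lambda_1T^{i-1}f=0$ --- is exactly the paper's argument for (ii)$\Rightarrow$(i) (the paper phrases it contrapositively, but it is the same computation). Where you genuinely diverge is (iii)$\Rightarrow$(i): you first invoke Theorem \ref{cloran} to get $\Ran T^*$ closed, reduce injectivity of $T$ to the single condition $\Ran T^*=\h$ via $\overline{\Ran T^*}=(\Ker T)^\perp$, and then argue by density, showing every frame vector $\Lambda_i^*e_j$ lies in the closed subspace $\Ran T^*$ (for $i\geq 2$ automatically from $\Lambda_i^*=(T^*)^{i-1}\Lambda_1^*$, for $i=1$ by hypothesis). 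The paper instead splits the reconstruction formula directly, writing $f=\Lambda_1^*\Lambda_1S_\Lambda^{-1}f+T^*\bigl(\sum_i\Lambda_i^*\Lambda_{i+1}S_\Lambda^{-1}f\bigr)$ and reading off $f\in\Ran T^*$ from the hypothesis $\Ran\Lambda_1^*\subseteq\Ran T^*$; this exhibits surjectivity of $T^*$ without needing closedness of the range for that implication. Your version buys a cleaner logical skeleton (everything routed through one closed-range reformulation, which also makes the ``clear'' direction (i)$\Rightarrow$(iii) honest, since that direction really does need $\Ran T^*$ closed); the paper's version buys a more self-contained and explicit witness for membership in $\Ran T^*$. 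Both are sound.
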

\begin{proof}
$(i)\Rightarrow(ii)$ and $(i)\Rightarrow(iii)$ are clear.\\
$(ii)\Rightarrow(i)$ Suppose that $T$ is not injective. Then there exists $0\neq f\in\Ker T$. We get 
\begin{equation*}
f=\sum_{i\in\mathbb{N}}S_\Lambda^{-1}\Lambda_i^*\Lambda_i f
=S_\Lambda^{-1}\Lambda_1^*\Lambda_1f+\sum_{i\in\mathbb{N}}S_\Lambda^{-1}\Lambda_{i+1}^*\Lambda_i Tf=S_\Lambda^{-1}\Lambda_1^*\Lambda_1f.
\end{equation*}
So $f\in\Ran(S_\Lambda^{-1}\Lambda_1^*)$, which is a contradiction.\\
$(iii)\Rightarrow(i)$ For any $f\in\h$, we have
\begin{align*}
f=\sum_{i\in\mathbb{N}}\Lambda_i^*\Lambda_i S_\Lambda^{-1}f
&=\Lambda_1^*\Lambda_1 S_\Lambda^{-1}f+\sum_{i\in\mathbb{N}}T^*\Lambda_i^*\Lambda_{i+1} S_\Lambda^{-1}f\\
&=\Lambda_1^*\Lambda_1 S_\Lambda^{-1}f+T^*\Big(\sum_{i\in\mathbb{N}}\Lambda_i^*\Lambda_{i+1} S_\Lambda^{-1}f\Big).
\end{align*}
Since $\Ran\Lambda_1^*\subseteq\Ran T^*$, $f\in\Ran T^*$. Therefore $T^*$ is surjective, and so $T$ is injective.
\end{proof}
The main purpose of the reminder of the paper is to show that the operator representation of $g$-frames can not be unitary and compact.
\begin{thm}\label{conv}
Let $\Lambda=\{\Lambda_1T^{i-1}\in B(\h,\K):i\in \mathbb{N}\}$ be a $g$-frame. Then for every $f\in\h$, $T^n f\rightarrow 0$ as $n\rightarrow\infty$.
\end{thm}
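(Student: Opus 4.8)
The plan is to play the upper and lower $g$-frame inequalities against each other, exploiting the single observation that applying a power of $T$ to a vector merely shifts the index in the frame sum. I would not need any structural result about $T$; everything follows from the defining inequality \eqref{cgframe} specialized to the representation $\Lambda_i=\Lambda_1 T^{i-1}$.

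First I would record what the upper bound buys us. Fixing $f\in\h$ and writing the $g$-frame sum in terms of $k=i-1$, the right-hand inequality of \eqref{cgframe} gives
\[
\sum_{k=0}^{\infty}\|\Lambda_1 T^{k}f\|^2=\sum_{i\in\mathbb{N}}\|\Lambda_1 T^{i-1}f\|^2\leq B_\Lambda\|f\|^2<\infty .
\]
Thus the nonnegative series $\sum_{k\ge 0}\|\Lambda_1 T^{k}f\|^2$ converges, so its tails $\sum_{k\ge n}\|\Lambda_1 T^{k}f\|^2$ tend to $0$ as $n\to\infty$.

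The decisive move is to feed the vector $T^n f$, rather than $f$ itself, into the lower bound of \eqref{cgframe}. Since $\Lambda_i=\Lambda_1 T^{i-1}$, we have $\Lambda_i(T^n f)=\Lambda_1 T^{i-1+n}f$, and reindexing by $k=i-1+n$ identifies the frame sum for $T^n f$ with a tail of the series above:
\[
A_\Lambda\|T^n f\|^2\leq\sum_{i\in\mathbb{N}}\|\Lambda_1 T^{i-1+n}f\|^2=\sum_{k=n}^{\infty}\|\Lambda_1 T^{k}f\|^2 .
\]

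Finally, letting $n\to\infty$ the right-hand side is the tail of a convergent series and hence vanishes; dividing by $A_\Lambda>0$ forces $\|T^n f\|\to 0$, which is the assertion. I expect no genuine analytic obstacle here: the entire content is the substitution $f\rightsquigarrow T^n f$ together with the finiteness granted by the upper bound, and the only point requiring care is keeping the index shift $k=i-1+n$ straight so that the frame sum for $T^n f$ is correctly matched to the tail of the series for $f$.
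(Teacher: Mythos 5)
Your argument is correct and is essentially identical to the paper's proof: both apply the lower $g$-frame bound to $T^n f$, recognize the resulting sum as a tail of the series $\sum_k\|\Lambda_1 T^k f\|^2$, and use the upper bound on $f$ to force that tail to vanish. No discrepancies to report.
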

\begin{proof}
For every $n\in\mathbb{N}$ and $f\in\h$, we have
\begin{equation}\label{first}
A_\Lambda\|T^nf\|^2\leq\sum_{i\in\mathbb{N}}\|\Lambda_1T^{i-1+n}f\|^2=\sum_{i=n}^\infty\|\Lambda_1T^if\|^2.
\end{equation}
On the other hand, by $\sum_{i\in\mathbb{N}}\|\Lambda_1T^{i-1}f\|^2\leq B_\Lambda\|f\|^2$, we get $\sum_{i=n}^\infty\|\Lambda_1T^if\|^2\rightarrow 0$
 as $n\rightarrow\infty$. Therefore, by the inequality \eqref{first}, we conclude that $T^nf\rightarrow 0$ as $n\rightarrow\infty$.
\end{proof}
\begin{cor}\label{unitary}
For every unitary operator $T$ and every $\Lambda_1\in B(\h,\K)$, the sequence $\Lambda=\{\Lambda_1T^{i-1}\in B(\h,\K):i\in \mathbb{N}\}$ can not be a $g$-frame. 
\end{cor}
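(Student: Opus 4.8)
The plan is to argue by contradiction and invoke the convergence established in Theorem \ref{conv}. Suppose, contrary to the claim, that for some unitary $T\in B(\h)$ and some $\Lambda_1\in B(\h,\K)$ the sequence $\Lambda=\{\Lambda_1T^{i-1}:i\in\mathbb{N}\}$ is a $g$-frame. Then $\Lambda$ has a representation by the unitary operator $T$, so all the hypotheses of Theorem \ref{conv} are met.

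Applying Theorem \ref{conv} directly, I would conclude that $T^n f\to 0$ as $n\to\infty$ for every $f\in\h$. The point of the contradiction is that a unitary operator is an isometry, so $\|T^n f\|=\|f\|$ for all $n\in\mathbb{N}$ and all $f\in\h$. Fixing any nonzero $f\in\h$, the sequence $\{\|T^n f\|\}_{n}$ is therefore constantly equal to $\|f\|>0$ and cannot converge to $0$. This contradicts $T^n f\to 0$, and hence $\Lambda$ cannot be a $g$-frame.

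I expect there to be essentially no obstacle here, since the substantive analytic content has already been extracted in Theorem \ref{conv}: the upper $g$-frame bound forces the tails $\sum_{i=n}^{\infty}\|\Lambda_1 T^i f\|^2$ to vanish, while the lower bound transfers this decay to $\|T^n f\|$. The only remaining ingredient is the elementary observation that unitarity is incompatible with the strong convergence $T^n f\to 0$ for nonzero $f$. Thus the corollary is an immediate consequence of Theorem \ref{conv}, and the proof reduces to recording the norm-preservation property $\|T^n f\|=\|f\|$ and the resulting contradiction.
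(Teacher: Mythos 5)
Your proposal is correct and follows essentially the same route as the paper: both invoke Theorem \ref{conv} to get $T^n f\to 0$ and then contradict this with the norm preservation of a unitary operator (the paper writes this as $\|f\|=\|(T^*)^nT^nf\|\le\|T^nf\|$, you state the isometry property $\|T^nf\|=\|f\|$ directly, which is the same observation).
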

\begin{proof}
For every $f\in\h$,
\begin{equation}\label{second}
\|f\|=\|(T^*)^nT^nf\|\leq\|T^*\|^n\|T^nf\|=\|T^nf\|.
\end{equation}
If $\Lambda$ is a $g$-frame, then by Theorem \ref{conv}, $T^nf\rightarrow 0$ as $n\rightarrow\infty$, and so by the inequality \eqref{second}, $\|f\|\rightarrow 0$, that is a contradiction.
\end{proof}
\begin{cor}
Let $\Lambda=\{\Lambda_i\in B(\h,\K):i\in \mathbb{N}\}$ and $\Theta=\{\Theta_i\in B(\h,\K):i\in \mathbb{N}\}$
be two $g$-orthonormal bases. Then for every $\Gamma_1\in B(\h,\K)$, the sequence 
$\Gamma=\{\Gamma_1 S_{\Lambda\Theta}^{i-1}\in B(\h,\K):i\in\mathbb{N}\}$ is not a $g$-frame.
\end{cor}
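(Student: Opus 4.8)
The plan is to show that the operator $S_{\Lambda\Theta}=T_\Lambda T_\Theta^*$ generating the representation is in fact unitary, and then to invoke Corollary \ref{unitary} directly. First I would apply Theorem \ref{u.n} to the two $g$-orthonormal bases $\Lambda$ and $\Theta$: since $\Lambda$ is a $g$-frame and $\Theta$ is a $g$-orthonormal basis, there is an onto operator $V\in B(\h)$ with $\Lambda_i=\Theta_i V^*$ for all $i\in\mathbb{N}$, and because $\Lambda$ is itself a $g$-orthonormal basis the same theorem guarantees that $V$ is unitary. Taking adjoints yields $\Lambda_i^*=V\Theta_i^*$.

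Next I would compute $S_{\Lambda\Theta}$ explicitly. For $f\in\h$ we have $T_\Theta^* f=\{\Theta_i f\}_{i\in\mathbb{N}}$, so
\begin{equation*}
S_{\Lambda\Theta} f=T_\Lambda T_\Theta^* f=\sum_{i\in\mathbb{N}}\Lambda_i^*\Theta_i f=\sum_{i\in\mathbb{N}}V\Theta_i^*\Theta_i f=V\Big(\sum_{i\in\mathbb{N}}\Theta_i^*\Theta_i f\Big)=V S_\Theta f.
\end{equation*}
Since $\Theta$ is a $g$-orthonormal basis it is in particular a Parseval $g$-frame, so its frame operator satisfies $S_\Theta=Id_{\h}$ (indeed $\langle S_\Theta f,f\rangle=\sum_{i\in\mathbb{N}}\|\Theta_i f\|^2=\|f\|^2$ for the positive operator $S_\Theta$, forcing $S_\Theta=Id_{\h}$). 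Hence $S_{\Lambda\Theta}=V$ is unitary.

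Finally, the sequence $\Gamma=\{\Gamma_1 S_{\Lambda\Theta}^{i-1}\}_{i\in\mathbb{N}}$ is exactly of the form $\{\Gamma_1 T^{i-1}\}_{i\in\mathbb{N}}$ with $T=S_{\Lambda\Theta}$ unitary, so Corollary \ref{unitary} immediately gives that $\Gamma$ cannot be a $g$-frame. There is no serious obstacle in this argument; the only substantive point is the identification $S_{\Lambda\Theta}=V$, which reduces the whole statement to the already-established unitary case. The main thing to be careful about is the direction of the intertwining relation $\Lambda_i=\Theta_i V^*$ and the order of composition in $T_\Lambda T_\Theta^*$, so that the adjoints land correctly and the sum collapses to $V S_\Theta$.
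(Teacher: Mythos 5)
Your proposal is correct and follows essentially the same route as the paper: both invoke Theorem \ref{u.n} to produce a unitary operator relating the two $g$-orthonormal bases, deduce that $S_{\Lambda\Theta}$ is unitary, and then conclude via Corollary \ref{unitary}. The only cosmetic difference is that you identify $S_{\Lambda\Theta}=V$ directly using $S_\Theta=Id_{\h}$, whereas the paper verifies $S_{\Lambda\Theta}S_{\Lambda\Theta}^*=S_{\Lambda\Theta}^*S_{\Lambda\Theta}=Id_{\h}$ by a computation with the synthesis operators.
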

\begin{proof}
By Theorem \ref{u.n}, there exists a unitary operator $U\in B(\h)$ such that $\Theta_i=\Lambda_iU$. We have
\begin{equation*}
S_{\Lambda\Theta}S_{\Lambda\Theta}^*=T_\Lambda T_\Theta^* T_\Theta T_\Lambda^*
=T_\Lambda T_\Lambda^* UU^*T_\Lambda T_\Lambda^*=S_\Lambda Id_\h S_\Lambda=Id_\h,
\end{equation*}
and similary $S_{\Lambda\Theta}^*S_{\Lambda\Theta}=Id_\h$. So $S_{\Lambda\Theta}$ is a unitary operator
on $\h$ and by Corolary \ref{unitary}, $\Gamma$ is not a $g$-frame for every $\Gamma_1\in B(\h,\K)$.
\end{proof}
\begin{prop}
Let $\h_1$ and $\h_2$ be two Hilbert spaces. Assume that $T\in B(\h_1)$, $S\in B(\h_2)$ and $\Lambda\in B(\h_1,\K)$, $\Theta\in B(\h_2,\K)$ such that $T=V^{-1}SV$ and $\Theta V=\Lambda$ for some $V\in GL(\h_1,\h_2)$. Then $\{\Lambda T^{i-1}\in B(\h_1,\K):i\in \mathbb{N}\}$ is a $g$-frame, if and only if $\{\Theta S^{i-1}\in B(\h_2,\K):i\in \mathbb{N}\}$ is a $g$-frame. In the affirmative case $V$ is unique.
\end{prop}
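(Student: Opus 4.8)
The plan is to collapse every hypothesis into the single identity $\Lambda T^{i-1}=\Theta S^{i-1}V$ and then transport the $g$-frame inequality back and forth across the invertible intertwiner $V$. First I would note that $T=V^{-1}SV$ gives, by a telescoping induction (the interior factors $VV^{-1}$ cancel), $T^{i-1}=V^{-1}S^{i-1}V$ for every $i\in\mathbb{N}$; combining this with $\Lambda=\Theta V$ yields $\Lambda T^{i-1}=\Theta V\,V^{-1}S^{i-1}V=\Theta S^{i-1}V$ for all $i$. This places us in exactly the two-space analogue of Remark \ref{gl} (the companion of \cite[Corollary 2.26]{AnajFR}), the only novelty being that $V$ now runs between the two distinct spaces $\h_1$ and $\h_2$.

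For the equivalence, suppose $\{\Theta S^{i-1}\}_{i\in\mathbb{N}}$ is a $g$-frame for $\h_2$ with bounds $A,B$. For $f\in\h_1$ I would set $g=Vf\in\h_2$ and use the displayed identity to write $\sum_{i\in\mathbb{N}}\|\Lambda T^{i-1}f\|^2=\sum_{i\in\mathbb{N}}\|\Theta S^{i-1}(Vf)\|^2$, which the frame inequality sandwiches between $A\|Vf\|^2$ and $B\|Vf\|^2$. Since $V\in GL(\h_1,\h_2)$ one has $\|V^{-1}\|^{-1}\|f\|\leq\|Vf\|\leq\|V\|\,\|f\|$, so $\{\Lambda T^{i-1}\}_{i\in\mathbb{N}}$ is a $g$-frame for $\h_1$ with bounds $A\|V^{-1}\|^{-2}$ and $B\|V\|^2$. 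The converse is the identical argument run through $V^{-1}\in GL(\h_2,\h_1)$: from $\Lambda T^{i-1}=\Theta S^{i-1}V$ we get $\Theta S^{i-1}=\Lambda T^{i-1}V^{-1}$, and repeating the sandwich estimate transfers a $g$-frame for $\h_1$ to one for $\h_2$.

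For the uniqueness, I would take two candidates $V_1,V_2\in GL(\h_1,\h_2)$ satisfying both relations and set $W=V_1-V_2$. The intertwining $V_jT=SV_j$ gives $WT=SW$, hence $WT^{i-1}=S^{i-1}W$ for all $i$ by the same induction as above, while $\Theta V_1=\Lambda=\Theta V_2$ gives $\Theta W=0$. Consequently $\Theta S^{i-1}W=\Theta W\,T^{i-1}=0$ for every $i$. Now I invoke that a $g$-frame is $g$-complete (its lower bound annihilates any vector killed by all the $\Theta S^{i-1}$): applying the last identity to an arbitrary $h\in\h_1$ shows $\Theta S^{i-1}(Wh)=0$ for all $i$, forcing $Wh=0$, whence $W=0$ and $V_1=V_2$.

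None of the steps is genuinely hard; the only points needing care are the clean derivation of the two power/intertwining identities $T^{i-1}=V^{-1}S^{i-1}V$ and $WT^{i-1}=S^{i-1}W$, and the recognition that uniqueness rests entirely on the $g$-completeness enjoyed by every $g$-frame rather than on any finer structure.
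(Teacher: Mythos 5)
Your argument for the equivalence is exactly the paper's: both derive $\Lambda T^{i-1}=\Theta S^{i-1}V$ from the telescoping of $(V^{-1}SV)^{i-1}$ and then transfer the frame inequality across the invertible $V$ (the paper simply asserts the transfer from $V\in GL(\h_1,\h_2)$, while you make the bounds $A\|V^{-1}\|^{-2}$ and $B\|V\|^2$ explicit, which is a small improvement). Where you genuinely diverge is the uniqueness claim. The paper argues by expanding $(V^*)^{-1}f$ in the frame $\{(T^{i-1})^*\Lambda^*e_j\}_{i\in\mathbb{N},j\in J}$ and showing $(V^*)^{-1}$ sends each $(T^{i-1})^*\Lambda^*e_j$ to $(S^{i-1})^*\Theta^*e_j$, so that $(V^*)^{-1}$ --- hence $V$ --- is pinned down on a set with dense span; as written this is rather terse and leaves the reader to supply the density/continuity conclusion. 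You instead form $W=V_1-V_2$, check the intertwining $WT^{i-1}=S^{i-1}W$ and $\Theta W=0$, and kill $W$ with the lower $g$-frame bound of $\{\Theta S^{i-1}\}_{i\in\mathbb{N}}$ ($g$-completeness). The two arguments rest on the same underlying fact --- the $g$-frame is complete --- but yours is the primal version and is self-contained and fully rigorous, whereas the paper's is the adjoint version and skips the final step. Your proof is correct throughout.
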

\begin{proof}
For every $f\in\h_1$, we have
\begin{align*}
\sum_{i\in\mathbb{N}}\|\Lambda T^{i-1}f\|^2=\sum_{i\in\mathbb{N}}\|\Theta V(V^{-1}SV)^{i-1}f\|^2&=\sum_{i\in\mathbb{N}}\|\Theta VV^{-1}S^{i-1}Vf\|^2\\
&=\sum_{i\in\mathbb{N}}\|\Theta S^{i-1}Vf\|^2.
\end{align*}
Since $V\in GL(\h_1,\h_2)$, the sequence $\{\Lambda T^{i-1}\in B(\h_1,\K):i\in \mathbb{N}\}$ is a $g$-frame, if and only if $\Lambda=\{\Theta S^{i-1}\in B(\h_2,\K):i\in \mathbb{N}\}$ is a $g$-frame. Also, by Theorem \ref{framegframe}, there exists $\{c_{ij}\}_{i\in\mathbb{N},j\in J}\in \ell^2(\mathbb{C},\mathbb{N})$ such that
\begin{align*}
(V^*)^{-1}f&=(V^*)^{-1}\Big(\sum_{i\in\mathbb{N},j\in J}c_{ij}(T^{i-1})^*\Lambda^*e_j\Big)\\
&=(V^*)^{-1}\Big(\sum_{i\in\mathbb{N},j\in J}c_{ij}V^*(S^{i-1})^*(V^{-1})^*V^*\Theta^* e_j\Big)\\
&=\sum_{i\in\mathbb{N},j\in J}c_{ij}(S^{i-1})^*\Theta^* e_j,
\end{align*}
which $\{e_j\}_{j\in J}$ is an orthonormal basis for $\K$.
\end{proof}
\begin{prop}
Let $\Lambda=\{\Lambda_i\in B(\h,\K):i\in\mathbb{N}\}$ be a $g$-frame. If for $\Theta\in B(\h,\K)$ the sequence 
$\{\Theta S_{\Lambda}^{i-1}\in B(\h,\K):i\in \mathbb{N}\}$ is a $g$-frame, then $A_\Lambda<1$.
\end{prop}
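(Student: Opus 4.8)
The plan is to apply Theorem~\ref{conv} to the second $g$-frame and then exploit the positivity and invertibility of the frame operator $S_\Lambda$.

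First I would observe that the hypothesis says \emph{exactly} that $\{\Theta S_{\Lambda}^{i-1}\}_{i\in\mathbb{N}}$ is a $g$-frame that is represented by $T=S_\Lambda$ (with the r\^ole of $\Lambda_1$ played by $\Theta$, in the notation of Theorem~\ref{conv}). Consequently Theorem~\ref{conv} applies verbatim and yields
$$S_\Lambda^{n}f\longrightarrow 0 \quad \text{as } n\to\infty, \quad \text{for every } f\in\h.$$

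Next I would convert the lower $g$-frame bound of $\Lambda$ into a quantitative lower bound on $\|S_\Lambda^{n}f\|$. Since $S_\Lambda$ is positive, invertible and satisfies $A_\Lambda\, Id_{\h}\leq S_\Lambda$, its inverse obeys $S_\Lambda^{-1}\leq A_\Lambda^{-1}\, Id_{\h}$, whence $\|S_\Lambda^{-1}\|\leq A_\Lambda^{-1}$ and therefore $\|S_\Lambda^{-n}\|\leq A_\Lambda^{-n}$. Writing $\|f\|=\|S_\Lambda^{-n}S_\Lambda^{n}f\|\leq\|S_\Lambda^{-n}\|\,\|S_\Lambda^{n}f\|$ then gives $\|S_\Lambda^{n}f\|\geq A_\Lambda^{n}\|f\|$ for every $f\in\h$ and every $n$. (Equivalently, the functional calculus for the self-adjoint $S_\Lambda$, whose spectrum lies in $[A_\Lambda,B_\Lambda]$, delivers $S_\Lambda^{n}\geq A_\Lambda^{n}\, Id_{\h}$ directly.)

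Finally I would argue by contradiction: if $A_\Lambda\geq 1$, then $A_\Lambda^{n}\geq 1$, so $\|S_\Lambda^{n}f\|\geq\|f\|$ for all $n$; choosing any $f\neq 0$ contradicts $S_\Lambda^{n}f\to 0$ from the first step. Hence $A_\Lambda<1$. The computation is short, so I do not anticipate a genuine obstacle; the only point requiring care is the passage from the operator inequality $A_\Lambda\, Id_{\h}\leq S_\Lambda$ to the pointwise estimate $\|S_\Lambda^{n}f\|\geq A_\Lambda^{n}\|f\|$, which is precisely where the positivity and self-adjointness of $S_\Lambda$ are used (either through the norm of $S_\Lambda^{-1}$ or through the spectral mapping). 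Everything else is a direct invocation of Theorem~\ref{conv}.
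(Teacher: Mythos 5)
Your proposal is correct, and it is essentially the argument the paper intends: the paper's ``proof'' is only a pointer to \cite[Proposition~2.7]{RashNej}, whose frame-case argument is exactly what you reproduce in the $g$-frame setting --- apply Theorem~\ref{conv} with $T=S_\Lambda$ and $\Lambda_1=\Theta$ to get $S_\Lambda^n f\to 0$, then use positivity of $S_\Lambda$ to get $\|S_\Lambda^n f\|\geq A_\Lambda^n\|f\|$ and conclude $A_\Lambda<1$. Your writeup in fact supplies the details the paper omits; the only step needing care, the passage from $A_\Lambda\, Id_{\h}\leq S_\Lambda$ to $\|S_\Lambda^{n}f\|\geq A_\Lambda^{n}\|f\|$, is handled correctly via $\|S_\Lambda^{-n}\|\leq A_\Lambda^{-n}$.
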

\begin{proof}
The proof is the same as the proof of the \cite[Proposition 2.7]{RashNej}.
\end{proof}

In \cite[Corollary 2.4]{RashNej}, it has been shown that for Riesz basis $\{T^{i-1}f_1\}_{i\in\mathbb{N}}$ the operator $T$ can not be surjective. While the following examples show that for the 
$g$-Riesz basis $\{\Lambda_1T^{i-1}\in B(\h,\K):i\in \mathbb{N}\}$, the operator $T$ can be injective.
\begin{exa}
\begin{enumerate}
\item[(i)] For $\Lambda_1\in GL(\h)$, the set $\{\Lambda_1\}$ is a $g$-Riesz basis which is represented by $Id_\h$.
\item[(ii)] By  \cite[Corollary 2.4]{RashNej}, for a Riesz basis $F=\{T^{i-1}f_1\}$, $T$ is not surjective. Consider $\Lambda=\{\Lambda_i\in B(\h,\mathbb{C}):i\in \mathbb{N}\}$, where
$\Lambda_i f=\langle f,f_i\rangle$. By Remark \ref{frem}, $\Lambda$ is represented by $T^*$ which is not injective. On the other hand, for any $i\in\mathbb{N}$, $\Lambda_i^*(1)=f_i$, and therefore 
by Theorem \ref{framegframe}, $\Lambda$ is a $g$-Riesz basis.
\end{enumerate}
\end{exa}
Theorem \cite[Proposition 3.5]{chretal} and \cite[Proposition 2.2]{RashNej} show that for frame $\{T^{i-1}f_1\}_{i\in\mathbb{N}}$, the operator $T$ can not be compact. In the following, we show that in the finite space K 
for $g$-frame $\{\Lambda_1T^{i-1}\in B(\h,\K):i\in \mathbb{N}\}$ the operator $T$ is not compact as well. By giving example, we show that this is not generally true.
\begin{prop}
Let $\Lambda=\{\Lambda_1T^{i-1}\in B(\h,\K):i\in \mathbb{N}\}$ be a $g$-frame, where $\K$ is a finite-dimensional Hilbert space. Then $T$ is not compact.
\end{prop}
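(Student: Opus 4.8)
The plan is to argue by contradiction: assume $T$ is compact and derive a contradiction from the lower $g$-frame bound. Since the assertion is vacuous when $\h$ is finite dimensional (there every operator is compact and such $g$-frames certainly exist), I work in the genuinely infinite-dimensional setting, which is the one implicit in the statement.

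First I would pin down the spectrum of $T$. By Theorem~\ref{conv} we have $T^{n}f\to 0$ for every $f\in\h$; applying this to an eigenvector shows that every eigenvalue $\lambda$ of $T$ must satisfy $|\lambda|<1$, since otherwise $\|T^{n}u\|=|\lambda|^{n}\|u\|\not\to 0$. Because $T$ is assumed compact, its nonzero spectrum consists solely of eigenvalues that can accumulate only at $0$; hence the supremum of their moduli is attained and $r(T)=\sup_{\lambda\in\sigma(T)}|\lambda|<1$ (with $r(T)=0$ if $T$ has no nonzero eigenvalue). I would emphasise that compactness is indispensable here: mere pointwise decay $T^{n}f\to 0$ does \emph{not} force $r(T)<1$ (the backward shift is a non-compact counterexample with $r=1$ and $T^nf\to 0$), and it is precisely compactness that lets one replace the spectrum by eigenvalues. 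This interplay is the step I expect to be the main obstacle.

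Next, Gelfand's formula $r(T)=\lim_{n}\|T^{n}\|^{1/n}$ lets me fix $\rho\in(r(T),1)$ and a constant $C>0$ with $\|T^{n}\|\le C\rho^{n}$ for all $n$; since $\|(T^{*})^{n}\|=\|T^{n}\|$, the same bound holds for the adjoint powers. Now the hypothesis that $\K$ is finite dimensional enters decisively: the positive operator $P:=\Lambda_{1}^{*}\Lambda_{1}\in B(\h)$ then has finite rank. Writing $\Lambda_{i}=\Lambda_{1}T^{i-1}$ gives $\Lambda_{i}^{*}\Lambda_{i}=(T^{*})^{i-1}P\,T^{i-1}$, so
$$S_{\Lambda}=\sum_{i=1}^{\infty}(T^{*})^{i-1}P\,T^{i-1},\qquad \big\|(T^{*})^{i-1}P\,T^{i-1}\big\|\le C^{2}\|P\|\,\rho^{2(i-1)}.$$
The geometric bound shows that this series converges in operator norm, not merely strongly.

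Finally, each partial sum $\sum_{i=1}^{N}(T^{*})^{i-1}P\,T^{i-1}$ has finite rank (a finite sum of finite-rank operators), so its operator-norm limit $S_{\Lambda}$ is compact. But $S_{\Lambda}\ge A_{\Lambda}\,Id_{\h}$ is invertible, and an invertible operator on an infinite-dimensional Hilbert space is never compact; this contradiction shows that $T$ cannot be compact. The role of the finite dimensionality of $\K$ is exactly the finite rank of $P$, equivalently that $\{\Lambda_i^*e_j\}_{i\in\mathbb{N},\,j\in J}$ is a frame indexed by a \emph{finite} set $J$ via Theorem~\ref{framegframe}; this is precisely what collapses in the infinite-dimensional case, in agreement with the authors' remark that the conclusion then fails in general.
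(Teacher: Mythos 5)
Your argument is correct, but it follows a genuinely different route from the paper's. You derive $r(T)<1$ by combining Theorem~\ref{conv} (so every eigenvalue has modulus $<1$) with the Riesz--Schauder description of the spectrum of a compact operator and Gelfand's formula, and then observe that $S_\Lambda=\sum_{i}(T^*)^{i-1}\Lambda_1^*\Lambda_1T^{i-1}$ is an operator-norm-convergent series of finite-rank operators (finite rank because $\dim\K<\infty$), hence compact, contradicting $S_\Lambda\ge A_\Lambda\,Id_\h$. The paper instead argues through the synthesis operator: by Theorem~\ref{cloran}, $\Ran T^*=\overline{\Span}\{T^*\Lambda_i^*e_j\}=\overline{\Span}\{\Lambda_{i+1}^*e_j\}$ is closed, and a compact operator with closed range has finite-dimensional range; adding the finitely many vectors $\Lambda_1^*e_j$ (again using $\dim\K<\infty$) makes $\overline{\Span}\{\Lambda_i^*e_j\}_{i\in\mathbb{N},j\in J}$ finite dimensional, which by Theorem~\ref{framegframe} forces $\dim\h<\infty$. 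The paper's proof is shorter and reuses its own closed-range theorem, needing only the elementary fact about compact operators with closed range; yours is independent of Theorem~\ref{cloran}, isolates cleanly where each hypothesis enters (compactness only to pass from pointwise decay to $r(T)<1$, finiteness of $\K$ only for the finite rank of $\Lambda_1^*\Lambda_1$), and as a by-product shows the stronger conclusion that $S_\Lambda$ itself would be compact. Both proofs tacitly assume $\h$ is infinite dimensional, which you rightly flag as implicit in the statement. One small point of care you already handled correctly: the supremum of the moduli of the nonzero eigenvalues of a compact operator is attained (only finitely many eigenvalues lie outside any disc around $0$), which is what legitimizes the conclusion $r(T)<1$ rather than merely $r(T)\le 1$.
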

\begin{proof}
Let $\{e_j\}_{j\in J}$ be an orthonormal basis for $\K$ and $T$ be compact. By Theorem \ref{cloran}, 
$\Ran T^*=\overline{\Span}\{\Lambda_{i+1}^*e_j\}_{i\in \mathbb{N},j\in J}$, and therefore by \cite[Lemma 2.5.1]{c4}, there exists $T^\dagger\in B(\h)$ such that $T^*T^\dagger=Id_{\Ran T^*}$. Since $T$ is compact, $T^*$ is compact and so $\Ran T^*$ is finite-dimensional. Consequently, 
$\overline{\Span}\{\Lambda_i^*e_j\}_{i\in \mathbb{N},j\in J}$ is finite-dimensional and so by Theorem
\ref{framegframe}, $\h$ is finite-dimensional, that is a contradiction.
\end{proof}
\begin{exa}
Consider $\Lambda_1=Id_ {\ell^2(\h,\mathbb{N})}$ and $T:\ell^2(\h,\mathbb{N})\rightarrow\ell^2(\h,\mathbb{N})$, that is defined by 
$T\{a_j\}_{j\in J}=(\alpha a_1,0,0,...)$ for a scalar $\alpha$ with $|\alpha|<1$. It is clear that $T$ is compact and 
$\Lambda=\{\Lambda_1T^{i-1}\in B(\ell^2(\h,\mathbb{N})):i\in \mathbb{N}\}$ is a $g$-frame. In fact, for every 
$\{a_j\}_{j\in J}\in\ell^2(\h,\mathbb{N})$, we have
\begin{align*}
\|\{a_j\}_{j\in J}\|^2&\leq\sum_{i\in\mathbb{N}}\|\Lambda_1T^{i-1}\{a_j\}_{j\in J}\|^2
=\sum_{i\in\mathbb{N}}\|T^{i-1}\{a_j\}_{j\in J}\|^2\\
&=\|\{a_j\}_{j\in J}\|^2+\sum_{i\in\mathbb{N}}\| T^i\{a_j\}_{j\in J}\|^2\\
&=\|\{a_j\}_{j\in J}\|^2+\sum_{i\in\mathbb{N}}\|(\alpha^i a_1,0,0,...)\|^2\\
&\leq\frac{1}{1-\alpha^2}\|\{a_j\}_{j\in J}\|^2.
\end{align*}
\end{exa}

\begin{thm}\label{j1}
Let $\Lambda=\{\Lambda_i\in B(\h,\K):i\in \mathbb{N}\}$ be a $g$-Riesz sequence and
 $\Theta=\{\Theta_i\in B(\h,\K):i\in \mathbb{N}\}$ be a sequence of operators, where 
 $\alpha:=\sum_{i\in\mathbb{N}}\|\Lambda_i-\Theta_i\|\|\Lambda_1 S_\Lambda^{-1}\|<1$ and 
 $\beta:=\sum_{i\in\mathbb{N}}\|\Lambda_i-\Theta_i\|^2<\infty$. Then $\Theta$ is a $g$-Riesz sequence.
\end{thm}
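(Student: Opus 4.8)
The plan is to verify the two defining inequalities of a $g$-Riesz sequence for $\Theta$ separately, over an arbitrary finite index set $I_n$ and arbitrary $\{g_i\}_{i\in I_n}$ with $g_i\in\K$. The upper inequality will follow purely from the summability hypothesis $\beta<\infty$, whereas the lower inequality is where the smallness hypothesis $\alpha<1$ is needed; the presence of both constants in the statement is explained by these distinct roles.

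First I would dispose of the upper bound. Since $\sum_{i\in\mathbb{N}}\|(\Lambda_i-\Theta_i)f\|^2\leq\big(\sum_{i\in\mathbb{N}}\|\Lambda_i-\Theta_i\|^2\big)\|f\|^2=\beta\|f\|^2$, the family $\{\Lambda_i-\Theta_i\}_{i\in\mathbb{N}}$ is $g$-Bessel with bound $\beta$. As $\Lambda$ is itself $g$-Bessel with bound $B_\Lambda$ and $\Theta_i=\Lambda_i-(\Lambda_i-\Theta_i)$, the triangle inequality in $\ell^2$ shows that $\Theta$ is $g$-Bessel with bound $(\sqrt{B_\Lambda}+\sqrt{\beta})^2$. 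Equivalently, the synthesis operator $T_\Theta$ is bounded, which is exactly the right-hand $g$-Riesz inequality $\|\sum_{i\in I_n}\Theta_i^*g_i\|^2\leq B_\Theta\sum_{i\in I_n}\|g_i\|^2$ with $B_\Theta=(\sqrt{B_\Lambda}+\sqrt{\beta})^2$.

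The heart of the argument is the lower bound. Writing $\sum_{i\in I_n}\Theta_i^*g_i=\sum_{i\in I_n}\Lambda_i^*g_i-\sum_{i\in I_n}(\Lambda_i-\Theta_i)^*g_i$ and using the reverse triangle inequality, I would bound the first term below by the $g$-Riesz lower bound of $\Lambda$, namely $\|\sum_{i\in I_n}\Lambda_i^*g_i\|\geq\sqrt{A_\Lambda}\big(\sum_{i\in I_n}\|g_i\|^2\big)^{1/2}$. To control the perturbation term I would recover the coefficients through the canonical dual: setting $h:=\sum_{j\in I_n}\Lambda_j^*g_j$ and using the $g$-biorthonormality $\Lambda_iS_\Lambda^{-1}\Lambda_j^*=\delta_{ij}Id_\K$ gives $g_i=\Lambda_iS_\Lambda^{-1}h$, hence $\|g_i\|\leq\|\Lambda_iS_\Lambda^{-1}\|\,\|h\|$. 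Consequently $\|\sum_{i\in I_n}(\Lambda_i-\Theta_i)^*g_i\|\leq\sum_{i\in I_n}\|\Lambda_i-\Theta_i\|\,\|g_i\|\leq\alpha\|h\|$, the constant $\alpha$ being precisely what bounds the combination of $\|\Lambda_i-\Theta_i\|$ with the norms of the canonical-dual operators. Combining the two estimates yields $\|\sum_{i\in I_n}\Theta_i^*g_i\|\geq(1-\alpha)\|h\|\geq(1-\alpha)\sqrt{A_\Lambda}\big(\sum_{i\in I_n}\|g_i\|^2\big)^{1/2}$, so the lower $g$-Riesz inequality holds with $A_\Theta=(1-\alpha)^2A_\Lambda>0$, exactly because $\alpha<1$.

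The step I expect to be the main obstacle is the coefficient-recovery identity $g_i=\Lambda_iS_\Lambda^{-1}h$ together with the interpretation of $S_\Lambda^{-1}$. Since $\Lambda$ is only assumed to be a $g$-Riesz sequence, its frame operator $S_\Lambda=\sum_{i}\Lambda_i^*\Lambda_i$ is invertible not on all of $\h$ but on the closed subspace $\h_0=\overline{\Span}\{\Lambda_i^*e_j\}_{i\in\mathbb{N},j\in J}$, on which $\Lambda$ is a genuine $g$-Riesz basis; all the dual-frame computations, and in particular the biorthonormality relation, must therefore be carried out inside $\h_0$, where the element $h$ indeed lives. One must also verify that the canonical-dual operators $\Lambda_iS_\Lambda^{-1}$ are uniformly controlled so that the displayed sum is dominated by the operative constant $\alpha$; this is the only place where the precise quantity $\|\Lambda_1S_\Lambda^{-1}\|$ appearing in the hypothesis enters the estimate.
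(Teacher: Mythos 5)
Your argument is correct and is in substance the same as the paper's: the upper bound via the triangle inequality and $\beta$, and the lower bound via the canonical-dual coefficient recovery $g_i=\Lambda_iS_\Lambda^{-1}h$ on $M=\overline{\Span}\{\Lambda_i^*e_j\}$, which gives exactly the paper's key estimate $\bigl\Vert\sum_i(\Lambda_i^*-\Theta_i^*)\Lambda_iS_\Lambda^{-1}h\bigr\Vert\le\alpha\Vert h\Vert$ (the paper merely packages this as an operator $U=T_\Theta T_{\widetilde\Lambda}^*$ with $U\Lambda_k^*=\Theta_k^*$ and $\Vert f-Uf\Vert\le\alpha\Vert f\Vert$ on $M$, whose global boundedness your direct version does not even need). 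Note that both you and the paper's own proof actually use $\sum_i\Vert\Lambda_i-\Theta_i\Vert\,\Vert\Lambda_iS_\Lambda^{-1}\Vert$ rather than the quantity with $\Lambda_1$ written in the statement of the theorem, which appears to be a typo there.
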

\begin{proof}
For every $\{g_i\}_{i\in {\mathbb{N}}}\in\ell^2(\K,\mathbb{N})$, we have
\begin{align*}
\|\sum_{i\in\mathbb{N}}\Theta_i^*g_i\|
&=\|\sum_{i\in\mathbb{N}}(\Theta_i^*-\Lambda_i^*)g_i+\sum_{i\in\mathbb{N}}\Lambda_i^*g_i\|\\
&\leq\sum_{i\in\mathbb{N}}\|\Theta_i^*-\Lambda_i^*\|\|g_i\|+\|\sum_{i\in\mathbb{N}}\Lambda_i^*g_i\|\\
&\leq\Big(\sum_{i\in\mathbb{N}}\|\Theta_i-\Lambda_i\|^2\Big)^{\frac{1}{2}}\|\{g_i\}_{i\in{\mathbb{N}}}\|_{\ell^2(\K,\mathbb{N})}
+\sqrt{B_\Lambda}\|\{g_i\}_{i\in{\mathbb{N}}}\|_{\ell^2(\K,\mathbb{N})}\\
&\leq\big(\sqrt{\beta}+\sqrt{B_\Lambda}\big)\|\{g_i\}_{i\in{\mathbb{N}}}\|_{\ell^2(\K,\mathbb{N})}.
\end{align*}
So for well-defined operator $U:\h\rightarrow\h$, defined by 
\begin{equation*}
Uf=\sum_{i\in\mathbb{N}}\Theta_i^*\Big(\sum_{j\in J}\langle f,S_\Lambda^{-1}\Lambda_i^*e_j\rangle e_j\Big),
\end{equation*}
we have
\begin{align*}
\|Uf\|&=\Big\|\sum_{i\in\mathbb{N}}\Theta_i^*\Big(\sum_{j\in J}\langle f,S_\Lambda^{-1}\Lambda_i^*e_j\rangle e_j\Big)\Big\|\\
&\leq\big(\sqrt{\beta}+\sqrt{B_\Lambda}\big)\Big\|\Big\{\sum_{j\in J}\langle f,S_\Lambda^{-1}\Lambda_i^*e_j\rangle e_j\Big\}_{i\in\mathbb{N}}\Big\|_{\ell^2(\K,\mathbb{N})}\\
&=\big(\sqrt{\beta}+\sqrt{B_\Lambda}\big)\Big\|\Big\{\sum_{j\in J}\langle f,P_MS_\Lambda^{-1}\Lambda_i^*e_j\rangle e_j\Big\}_{i\in\mathbb{N}}\Big\|_{\ell^2(\K,\mathbb{N})}\\
&=\big(\sqrt{\beta}+\sqrt{B_\Lambda}\big)\Big\|\Big\{\sum_{j\in J}\langle P_Mf,S_\Lambda^{-1}\Lambda_i^*e_j\rangle e_j\Big\}_{i\in\mathbb{N}}\Big\|_{\ell^2(\K,\mathbb{N})}\\
&=\big(\sqrt{\beta}+\sqrt{B_\Lambda}\big)\big\|\big\{\Lambda_i S_\Lambda^{-1}P_Mf\big\}_{i\in\mathbb{N}}\big\|_{\ell^2(\K,\mathbb{N})}\\
&\leq\frac{\sqrt{\beta}+\sqrt{B_\Lambda}}{\sqrt{A_\Lambda}}\|P_M f\|
\leq\frac{\sqrt{\beta}+\sqrt{B_\Lambda}}{\sqrt{A_\Lambda}}\|f\|,
\end{align*}
where $M=\overline{\Span}\{\Lambda_i^*e_j\}_{i\in\mathbb{N},j\in J}$. Note that the operator $U$ on $M$ is equal to $S_{\Theta\Lambda}S_\Lambda^{-1}$. On the other hand, for every $k\in\mathbb{N}$, we have
\begin{align*}
\langle U\Lambda^*_kg,f\rangle&=\sum_{i\in\mathbb{N}} \langle\Theta_{i}^* \Lambda_i S_{\Lambda}^{-1}\Lambda^*_kg,f\rangle=\sum_{i\in\mathbb{N}} \langle \Lambda^*_kg,S_{\Lambda}^{-1}\Lambda_i^*\Theta_{i}f\rangle\\
&=\langle g,\Theta_k f\rangle= \langle \Theta_k^* g, f\rangle,\quad f\in \h ,g\in \K,
\end{align*}
which implies $U\Lambda^*_k=\Theta_k^*$. Also
\begin{align*}
\|f-Uf\|&=\|\sum_{i\in\mathbb{N}}  \Lambda_i^* \Lambda_i S_{\Lambda}^{-1}f-\sum_{i\in\mathbb{N}}  \Theta_i^* \Lambda_i S_{\Lambda}^{-1}f\|\\
&= \| \sum_{i\in\mathbb{N}} (\Lambda_i^*-\Theta_i^*)\Lambda_i S_{\Lambda}^{-1}f\|\\
&\leq \sum_{i\in\mathbb{N}} \|\Lambda_i-\Theta_i\| \|\Lambda_i S_{\Lambda}^{-1}\|\|f\|=\alpha\|f\|,\quad f\in M,
\end{align*}
and so we get $\|Uf\|\geq (1-\alpha)\|f\|.$
Consequently, for any finite sequnce $\{g_i\}\subseteq \K$
\begin{align*}
\|\sum\Theta_{i}^*g_i\|&=\|\sum U\Lambda_{i}^* g_i\|=\| U\sum_{i\in\mathbb{N}} \Lambda_{i}^* g_i\|\\
&\geq(1-\alpha)\|\sum\Lambda_{i}^*g_i \|\geq (1-\alpha)\sqrt{A_{\Lambda}}\big(\sum \|g_i\|^2\big)^{1/2}.
\end{align*}
\end{proof}
\begin{thm}
	Let $\Lambda=\{\Lambda_{1}T^{i-1}\in B(\h,\K); i\in \mathbb{N}\}$ be a g-Riesz sequence and for $\Theta_1\in B(\h,\K)$ there exists $\mu\in [0,1)$ such that $\|\Theta_1T^i\|\leq \mu^i \|\Theta_1\| $ and $\|\Theta_1 \| < (1-\mu) \sqrt{A_{\Lambda}}.$ Then $\{(\Lambda_{1}+\Theta_1)T^{i-1}\in B(\h,\K); i\in \mathbb{N}\}$ is a g-Riesz sequence. 
\end{thm}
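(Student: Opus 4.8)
The plan is to recognize this as a direct application of Theorem \ref{j1}. Put the given $g$-Riesz sequence $\Lambda_i:=\Lambda_1 T^{i-1}$ in the role of $\Lambda$, and put the candidate sequence $\Theta_i:=(\Lambda_1+\Theta_1)T^{i-1}$ in the role of $\Theta$. Then the perturbation is exactly
\[
\Lambda_i-\Theta_i=\Lambda_1 T^{i-1}-(\Lambda_1+\Theta_1)T^{i-1}=-\,\Theta_1 T^{i-1},
\]
so the whole problem reduces to verifying the two numerical hypotheses of Theorem \ref{j1}: that $\beta:=\sum_{i\in\mathbb{N}}\|\Theta_1 T^{i-1}\|^2<\infty$ and that $\alpha:=\|\Lambda_1 S_\Lambda^{-1}\|\sum_{i\in\mathbb{N}}\|\Theta_1 T^{i-1}\|<1$. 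Once both hold, Theorem \ref{j1} delivers that $\Theta$ is a $g$-Riesz sequence, which is precisely the claim.

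The summability estimates are where the geometric-decay hypothesis is used. Reindexing $k=i-1$ and invoking $\|\Theta_1 T^{k}\|\leq\mu^{k}\|\Theta_1\|$ (the case $k=0$ being the trivial $\|\Theta_1\|\leq\|\Theta_1\|$), I would write
\[
\sum_{i\in\mathbb{N}}\|\Theta_1 T^{i-1}\|\leq\|\Theta_1\|\sum_{k=0}^{\infty}\mu^{k}=\frac{\|\Theta_1\|}{1-\mu},
\qquad
\beta\leq\|\Theta_1\|^2\sum_{k=0}^{\infty}\mu^{2k}=\frac{\|\Theta_1\|^2}{1-\mu^2}<\infty .
\]
Thus $\beta<\infty$ is automatic, and it remains to control $\alpha$ via the first estimate together with the factor $\|\Lambda_1 S_\Lambda^{-1}\|$.

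The main obstacle is the bound $\|\Lambda_1 S_\Lambda^{-1}\|\leq A_\Lambda^{-1/2}$, since $\Lambda$ is only a $g$-Riesz sequence rather than a $g$-frame for all of $\h$. I would handle this by viewing $\Lambda$ as a $g$-frame for its closed span $M=\overline{\Span}\{\Lambda_i^*e_j\}_{i\in\mathbb{N},j\in J}$, whose canonical dual has upper $g$-frame bound $A_\Lambda^{-1}$; then the dual generator $\Lambda_1 S_\Lambda^{-1}$ satisfies $\|\Lambda_1 S_\Lambda^{-1}f\|^2\leq\sum_{i\in\mathbb{N}}\|\Lambda_i S_\Lambda^{-1}f\|^2\leq A_\Lambda^{-1}\|f\|^2$, giving $\|\Lambda_1 S_\Lambda^{-1}\|\leq A_\Lambda^{-1/2}$ (the same restriction-to-$M$ and $P_M$ bookkeeping used inside the proof of Theorem \ref{j1}). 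Combining everything,
\[
\alpha\leq\frac{1}{\sqrt{A_\Lambda}}\cdot\frac{\|\Theta_1\|}{1-\mu}
<\frac{1}{\sqrt{A_\Lambda}}\cdot\frac{(1-\mu)\sqrt{A_\Lambda}}{1-\mu}=1,
\]
where the strict inequality is exactly the hypothesis $\|\Theta_1\|<(1-\mu)\sqrt{A_\Lambda}$; so both conditions of Theorem \ref{j1} hold and the conclusion follows.

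As a self-contained alternative (avoiding the dual generator entirely), one can argue directly: the family $\{\Theta_1 T^{i-1}\}$ is $g$-Bessel with bound at most $\|\Theta_1\|^2/(1-\mu^2)$ by the same geometric estimate applied to $\sum_i\|\Theta_1 T^{i-1}f\|^2$, so its synthesis operator has norm at most $\|\Theta_1\|/\sqrt{1-\mu^2}$. Then for every finite $\{g_i\}\subset\K$ the reverse triangle inequality gives
\[
\Big\|\sum\Theta_i^*g_i\Big\|\geq\Big\|\sum\Lambda_i^*g_i\Big\|-\Big\|\sum(\Theta_1 T^{i-1})^*g_i\Big\|
\geq\Big(\sqrt{A_\Lambda}-\tfrac{\|\Theta_1\|}{\sqrt{1-\mu^2}}\Big)\Big(\sum\|g_i\|^2\Big)^{1/2},
\]
with the analogous upper bound using $\sqrt{B_\Lambda}$; since $(1-\mu)\leq\sqrt{1-\mu^2}$, the stated hypothesis again forces the lower constant to be positive. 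I would present the Theorem \ref{j1} route as the primary proof, since its hypotheses are tailored to match the constant $(1-\mu)\sqrt{A_\Lambda}$ exactly.
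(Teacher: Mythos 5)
Your proposal is correct and follows essentially the same route as the paper: reduce to Theorem \ref{j1} by noting the perturbation is $\Theta_1 T^{i-1}$, then verify $\beta<\infty$ and $\alpha<1$ via the geometric series and the bound $\|\Lambda_1 S_\Lambda^{-1}\|\leq A_\Lambda^{-1/2}$. You in fact supply slightly more than the paper, which delegates that last norm bound to an unresolved remark reference.
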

\begin{proof}
	It is sufficient  to examine the conditions of Theorem \ref{j1} for the sequnce $\{(\Lambda_{1}+\Theta_1)T^{i-1}\in B(\h,\K); i\in \mathbb{N}\}$.
	\begin{align*}\label{j2}
	\sum_{i\in\mathbb{N}} \|(\Lambda_1+\Theta_1)T^{i-1} -\Lambda_{1}T^{i-1}\|^2&=\sum_{i\in\mathbb{N}}\|\Theta_1 T^{i-1}\|^2\nonumber\\
	&\leq \sum_{i\in\mathbb{N}}\mu^{2i-2}\|\Theta_1\|^2=\dfrac{\|\Theta_1\|^2}{1-\mu^2}.
	\end{align*}
	Also, by Remark \ref{remj1} 
	\begin{equation*}
	\sum_{i\in\mathbb{N}}\|\Theta_1 T^{i-1}\|\|\Lambda_{1}S_{\Lambda}^{-1}\|\leq \dfrac{\|\Theta_1\|}{(1-\mu)\sqrt{A_{\Lambda}}}<1.
	\end{equation*}
\end{proof}


\begin{thebibliography}{99}

\bibitem{ald1} A. Aldroubi, C. Cabrelli, U. Molter and S. Tang, {\it Dynamical sampling,} Applied and Computational Harmonic Analysis {\bf 42(3)} (2017): 378-401.

\bibitem{ald2} A. Aldroubi and A. Petrosyan, {\it Dynamical sampling and systems from iterative actions of operators,} In Frames and Other Bases in Abstract and Function Spaces, Birkh{\"a}user, Cham. (2017): 15-26.  

\bibitem{obli} J. Antezana, G. Corach, M. Ruiz and Stojanoff, D., {\it Oblique projections and frames,} Proceedings of the American Mathematical Society {\bf 134(4)} (2006): 1031-1037.

\bibitem{Anto} J. P. Antoine, {\it The continuous wavelet transform in image processing,} CWI Q {\bf 11(4)} (1998): 323-345.

\bibitem{Balador} P. Balazs, M. D{\"o}rfler, F. Jaillet, N. Holighaus and G. Velasco, {\it Theory, implementation and applications of nonstationary Gabor frames,} Journal of computational and applied mathematics {\bf 236(6)} (2011): 1481-1496.

\bibitem{fus} P. G. Casazza, G. Kutyniok and S. Li, {\it Fusion frames and distributed processing,} Applied and computational harmonic analysis {\bf 25(1)} (2008): 114-132.

\bibitem{fusion} P. G. Casazza and G. Kutyniok, {\it Finite frames: Theory and applications,} Springer Science $\&$ Business Media (2012).

\bibitem{caku} P. G. Casazza and G. Kutyniok, {\it Frames of subspaces,} Contemporary Mathematics {\bf 345} (2004): 87-114.


\bibitem{c4} O. Christensen, {\it An introduction to frames and Riesz bases,} Applied and Numerical Harmonic Analysis, Boston: Birkh{\"a}user (2016).


\bibitem{main} O. Christensen, M. Hasannasab, {\it Operator representations of frames: boundedness, duality, and stability,} Integral Equations and Operator Theory {\bf 88(4)} (2017): 483-499.


\bibitem{chretal} O. Christensen, M. Hasannasab and E. Rashidi, {\it Dynamical sampling and frame representations with bounded operators,} Journal of Mathematical Analysis and Applications {\bf 463(2)} (2018): 634-644.


\bibitem{Data2} R. A. DeVore, B. Jawerth and B. J. Lucier, {\it Data compression using wavelets: error, smoothness and quantization,} In Data Compression Conference, 1991. DCC'91 (1991): 186-195. IEEE.

\bibitem{DS} R. J. Duffin and A. C. Schaeffer, {\it A class of nonharmonic Fourier series,} Transactions of the American Mathematical Society {\bf 72(2)} (1952): 341-366.

\bibitem{Desti} M. Duval-Destin, M. A. Muschietti and B. Torr{\'e}sani, {\it Continuous wavelet decompositions, multiresolution, and contrast analysis,} SIAM journal on mathematical analysis {\bf 24(3)} (1993): 739-755.

\bibitem{gu} X. Guo, {\it Perturbations of invertible operators and stability of g-frames in Hilbert spaces,} Results in Mathematics {\bf 64(3-4)} (2013): 405-421.

\bibitem{HDL} D. Han and D. R. Larson, {\it Frames, bases and group representations,}  American Mathematical Society {\bf 697} (2000).

\bibitem{dtuykhmj} Y. Khedmati and M. S. Jakobsen, {\it Approximately dual and perturbation results for generalized translation invariant frames on LCA groups,} International Journal of Wavelets, Multiresolution and Information Processing {\bf 16(1)} (2017).

\bibitem{gframerep} Y. Khedmati and F. Ghobadzadeh, {\it G-frame representations with bounded operators,} arXiv preprint arXiv:1812.00386 (2018).

\bibitem{mus} A. Khosravi and K. Musazadeh, {\it Fusion frames and g-frames,} Journal of Mathematical Analysis and Applications {\bf 342(2)} (2008): 1068-1083.

\bibitem{Anaj and Rah} A. Najati and A. Rahimi, {\it Generalized frames in Hilbert spaces,} Bulletin of the Iranian Mathematical Society {\bf 35} (2011): 97-109.

\bibitem{AnajFR} A. Najati, M. H. Faroughi, and A. Rahimi, {\it G-frames and stability of g-frames in Hilbert spaces,} Methods of Functional Analysis and Topology  {\bf 14(03)} (2008): 271-286.

\bibitem{RashNej} E. Rashidi, A. Najati and E. Osgooei {\it Dynamical sampling: mixed frame operators, representations and perturbations,} ... {\bf ...} (2...): ....

\bibitem{Data1} G. M. Richter, M. Capacciol, G. Longo and H. Lorenz, {\it Data Compression and Wavelet Transforms,} In Symposium-International Astronomical Union {\bf 161} (1994): 219-223. Cambridge University Press.

\bibitem{ws} W. Sun,  {\it G-frames and g-Riesz bases,} Journal of Mathematical Analysis and Applications {\bf 322(1)} (2006): 437-452.

\bibitem{wsper} W. Sun,  {\it Stability of g-frames,} Journal of mathematical analysis and applications {\bf 326(2)} (2007): 858-868.

\end{thebibliography}
\end{document}